\newcommand\la{\lambda}
\newcommand\s{\sigma}
\newcommand\vf{\varphi}
\renewcommand\t{\tau}
\newcommand\Om{\Omega}
\newcommand\D{\Delta}
\newcommand\vL{\varLambda}
\newcommand\vG{\varGamma}
\newcommand{\ZZ}{\mathbb Z}
\newcommand{\CC}{\mathbb C}
\newcommand\bk{\mathbf{k}}
\newcommand\bh{\mathbf{h}}
\newcommand\CA{\mathcal{A}}
\newcommand\CB{\mathcal{B}}
\newcommand\CH{\mathcal{H}}
\newcommand\CO{\mathcal{O}}
\newcommand\CK{\mathcal{K}}
\newcommand\CP{\mathcal{P}}
\newcommand\FS{\mathfrak S}
\newcommand\Fm{\mathfrak m}
\newcommand{\Fk}{\mathfrak{k}}
\newcommand{\Fd}{\mathfrak{d}}
\newcommand\wh{\widehat}
\newcommand\ol{\overline}
\newcommand{\lan}{\langle}
\newcommand{\ran}{\rangle}
\newcommand{\ra}{\rightarrow }
\newcommand\Ker{\operatorname{Ker}}
\newcommand\Hom{\operatorname{Hom}}
\newcommand\End{\operatorname{End}}
\newcommand\Ind{\operatorname{Ind}}
\newcommand\res{\operatorname{res}}
\newcommand\Res{\operatorname{Res}}
\newcommand\Irr{\operatorname{Irr}}
\newcommand\Top{\operatorname{Top}}
\newcommand\Rad{\operatorname{Rad}}
\newcommand{\KZ}{\operatorname{KZ}}
\newcommand\GL{\operatorname{GL}}
\newcommand{\opp}{\operatorname{opp}}
\newcommand{\cmod}{\operatorname{-mod}}
\newcommand{\cproj}{\operatorname{-proj}}
\newcommand{\isom}{\,\raise2pt\hbox{$\underrightarrow{\sim}$}\,}
\newcounter{ichi}
\newcommand{\roi}{\roman{ichi}}
\newcounter{ni}
\newcommand{\roii}{\roman{ni}}
\newcounter{san}
\newcounter{yon}
\newcounter{go}
\newcounter{roku}
\newcounter{nana}
\newcommand{\Sc}{\mathscr{S}}
\newcommand{\He}{\mathscr{H}}
\newcommand{\A}{\mathscr{A}}
\newcommand{\B}{\mathscr{B}}
\newcommand{\dwar}{\hspace{-1mm} \downarrow} 
\newcommand{\upar}{\hspace{-1mm} \uparrow } 
\newtheorem{thm}{Theorem}[section]
\newtheorem{lem}[thm]{Lemma}
\newtheorem{cor}[thm]{Corollary}
\newtheorem{prop}[thm]{Proposition}
\def \para{\refstepcounter{thm} \par\medskip\noindent
                \textbf{\thethm .} }
\def \remark{\refstepcounter{thm} \par\medskip\noindent
                \textbf{Remark \thethm .} }
\def \remarks{\refstepcounter{thm} \par\medskip\noindent
                \textbf{Remarks \thethm .} }
\numberwithin{equation}{thm}
\begin{document}
\setlength{\baselineskip}{4.9mm}
\setlength{\abovedisplayskip}{4.5mm}
\setlength{\belowdisplayskip}{4.5mm}
\renewcommand{\theenumi}{\roman{enumi}}
\renewcommand{\labelenumi}{(\theenumi)}
\renewcommand{\thefootnote}{\fnsymbol{footnote}}
\renewcommand{\thefootnote}{\fnsymbol{footnote}}
\parindent=20pt
\newcommand{\dis}{\displaystyle}

\medskip
\begin{center}
{\large \bf Blocks of category $\CO$ for rational Cherednik algebras 
	\\[2mm]
	and of cyclotomic Hecke algebras of type $G(r,p,n)$} 
\\
\vspace{1cm}
Kentaro Wada\footnote{This research was  supported  by JSPS Research Fellowships for Young Scientists. 
\\ 2000 Mathematics Subject Classification. Primary 20C08; Secondary 20C20, 05E10.} 

\address{Graduate School of Mathematics Nagoya University, 
	Furocho, Chikusaku, Nagoya, Japan 464-8602}
\email{kentaro-wada@math.nagoya-u.ac.jp} 

\end{center}

\title{}
\maketitle 
\markboth{Kentaro Wada}{Blocks of category $\CO$ for rational Cherednik algebras 
and of cyclotomic Hecke algebras}




\begin{abstract}
We classify blocks of category $\CO$ for rational Cherednik algebras 
and of cyclotomic Hecke algebras of type $G(r,p,n)$ 
by using the \lq\lq residue equivalence" for multi-partitions. 
\end{abstract}


\setcounter{section}{-1}

\section{Introduction} 
Let $V$ be a finite dimensional vector space over $\CC$, 
and $W \subset \GL(V)$ be a finite complex reflection group.   
The rational Cherednik algebra $\CH=\CH(W)$ over $\CC$ 
associated to $W$ was introduced by \cite{EG}. 
It is known that the category $\CO$ of $\CH$ 
is a highest weight category 
with standard modules $\{\D(\la) \,|\, \la \in \vL^+\}$, 
where $\vL^+$ is an index set of pairwise non-isomorphic simple $W$-modules over $\CC$ 
(\cite{Gu}, \cite{GGOR}).  
Let $\He=\He(W)$ be the cyclotomic Hecke algebra associated to $W$ 
with appropriate parameters. 
Let 
$\KZ : \CO \ra \He \cmod$ be the Knizhnik-Zamolodchikov functor defined in \cite{GGOR}. 
It is known that 
$\CO$ is a quasi-hereditary cover (highest weight cover) of $\CH$ 
in the sense of \cite{R}.  
Put $S(\la) = \KZ(\D(\la))$.  
We see that 
there exists a one-to-one correspondence between 
the blocks of $\CO$ and of $\CH$ 
thanks to the double centralizer property. 
Moreover, 
we see that 
the classification of blocks of $\CO$ and of $\CH$ 
is given by the linkage classes on $\{\D(\la) \,|\, \la \in \vL^+\}$ 
or on $\{S(\la)\,|\, \la \in \vL^+\}$ 
(see \S \ref{section quasi-hereditary} for detailes).    
Hence, 
it is enough to classify the blocks of $\CO$ and of $\He$ 
that we determine the linkage classes on $\{S(\la)\,|\, \la \in \vL^+\}$.

In the case where  $W$ is the complex reflection group of type $G(r,1,n)$, 
$\He$ is also called an Ariki-Koike algebra. 
In this case, 
$\vL^+$ is the set of $r$-partitions of size $n$ denoted by $\CP_{n,r}$.  
Then 
the linkage classes on $\{S(\la)\,|\, \la \in \vL^+\}$ 
is given by the equivalent relation \lq\lq $\sim_R$", so called residue equivalence, on $\CP_{n,r}$ 
by \cite{LM}.   
(Note that the Specht module $S^\la$ ($\la \in \vL^+$) 
being considered in \cite{LM} does not coincides with $S(\la)$ 
in general. However, one sees that 
the linkage classes on $\{S^\la\,|\, \la \in \vL^+\}$ coincides 
with the linkage classes on $\{S(\la)\,|\, \la \in \vL^+\}$. See \S \ref{G(r,1,n)}.) 

Our purpose is to classify the blocks of $\CO$ and of $\He$ 
in the case where $W$ is the complex reflection group of type $G(r,p,n)$. 
As seen in the above, we should determine the linkage classes on $\{S(\la)\,|\, \la \in \vL^+\}$. 
Let $W^\dag$ be the complex reflection group of type $G(r,1,n)$, 
and we denote by adding the superscript $\dag$ for objects of type $G(r,1,n)$. 
It is known that 
$W$ is a normal subgroup of $W^\dag$ with the index $p$, 
and that 
$\He$ is a subalgebra of $\He^\dag$. 
An index set $\vL^+$ of pairwise non-isomorphic simple $W$-modules over $\CC$ 
(thus, $\vL^+$ is also an index set of standard modules of $\CO$) 
is given as the equivalent classes of $\CP_{n,r} \times \ZZ / p \ZZ$ 
under a certain equivalent relation \lq\lq $\sim_{\ast}$" on $\CP_{n,r} \times \ZZ / p \ZZ$ 
(see \ref{par res D to B} for details). 
We denote by $\la \lan i \ran  \in \vL^+$  
the equivalent class containing $(\la, \ol{i}) \in \CP_{n,r} \times \ZZ / p\ZZ$.

Some relations between representations of $\He$ and of $\He^\dag$ 
have been studied in \cite{A95}, \cite{GJ}, 
\cite{Hu02}, \cite{Hu03}, \cite{Hu04}, \cite{Hu07} and \cite{Hu09}  
by using Clifford theory. 
Combining these results with some fundamental properties of quasi-hereditary covers,   
and with the classification of blocks of $\He^\dag$ by using the residue equivalence \lq\lq $\sim_R$", 
we give the classification of the blocks of $\CO$ and of $\He$ 
by using a certain equivalent relation \lq\lq $\approx $" on $\CP_{n,r}$ 
as follows.

Let \lq\lq $\approx $" be the equivalent relation on $\CP_{n,r}$ 
defined by 
$\la \approx \mu$ if 
$\la \sim_R \mu[j]$ for some $j \in \ZZ$, 
where 
$\mu[j] \in \CP_{n,r}$ 
is obtained from $\mu \in \CP_{n,r}$ by a certain permutation of components of $\mu$ 
(see \ref{par res D to B} for the precise definition of $\mu[j]$).  
Put 
$\vG=\{ \la \in \CP_{n,r} \,|\, \la \not\sim_R \mu \text{ for any } \mu \in \CP_{n,r} 
	\text{ such that } \mu \not=\la \}$. 
Then our main theorem is the following. 
\\[2mm]
\textbf{Theorem \ref{main thm}} 
\begin{enumerate}
\item 
If $\la \in \vG$, 
then 
$\D(\la \lan i \ran)$ 
(resp. $S(\la \lan i \ran)$) 
is a simple object of $\CO$ 
(resp. simple $\He$-module) 
for any $i \in \ZZ$. 
Moreover, 
$\D(\la \lan i \ran)$ 
(resp. $S(\la \lan i \ran)$) 
is a block of $\CO$ (resp. of $\He$) 
itself. 

\item 
For $\la,\mu \in \CP_{n,r} \setminus \vG$ and $i,j \in \ZZ$, 
we have that 
\begin{align*}
&\text{
Both of $\D(\la \lan i \ran)$ and $\D(\mu \lan j \ran)$ 
belong to the same block of $\CO$
} 
\\
&\Leftrightarrow \text{ 
Both of $S(\la \lan i \ran)$ and $S(\mu \lan j \ran)$ 
belong to the same block of $\He$
}
\\
&\Leftrightarrow 
\la \approx \mu. 
\end{align*}

\end{enumerate}

\quad \\
\textbf{Acknowledgment:} 
The author is grateful to Professors 
S. Ariki, T. Kuwabara, H. Miyachi and T. Shoji for 
many valuable discussions and comments.

\quad \\
\textbf{Notations:} 
For an algebra $\A$, 
we denote by $\A \cmod$ the category of finitely generated $\A$-modules,  
and 
denote by 
$\A \cproj$ the full subcategory of $\A \cmod$ 
consisting of projective objects.  
Let 
$K_0(\A \cmod)$ be a Grothendieck group of $\A \cmod$. 
We denote by $[M]$ the image of $M$ in the $K_0(\A \cmod)$ for $M \in \A \cmod$. 
For $M \in \A \cmod$ and simple object $L$ of $\A \cmod$, 
we denote by 
$[M:L]_\A$ 
the multiplicity of $L$ in the composition series of $M$. 
We also denote by 
$\A^{\opp}$ the opposite algebra of $\A$. 
%


\section{Some properties of quasi-hereditary covers} 
\label{section quasi-hereditary}
In this section, 
we recall some notions of quasi-hereditary covers 
from \cite{R}, 
and prepare some fundamental properties. 

\para
Let $\A$ be a quasi-hereditary algebra over a field. 
Take a projective object $P$ in $\A \cmod$, 
and 
put 
$\B=\End_{\A}(P)^{\opp}$.  
Then we have the exact functor 
$F=\Hom_{\A}(P,-) : \A \cmod \ra \B \cmod$. 
Let 
$X$ 
be a progenerator of 
$\A \cmod$ 
such that 
$X=P \oplus P'$ 
for some projective object $P'$ in $\A \cmod$. 
Then 
$\End_{\A}(X)^{\opp}$ is Morita equivalent to $\A$. 
We may suppose that 
$\End_{\A}(X)^{\opp} \cong \A$ 
without loss of generality,  
and 
we suppose that $\End_{\A}(X)^{\opp} \cong \A$. 

Throughout this section, 
we assume the following condition.  
\begin{description}
\item[(A1)]
The functor $F$ is fully faithful 
when we restrict to $\A \cproj$. 
\end{description}
Hence, 
$\A$ is a quasi-hereditary cover of $\B$ 
in the sense of \cite{R}. 
Since 
$X \in \A \cproj$, 
we have that 
\begin{align}
\label{A commutant of B}
\A \cong \End_\A(X)^{\opp} \cong \End_{\B}(F(X))^{\opp}.
\end{align} 

Note that 
$X=P \oplus P'$. 
Let  
$\vf_P^o \in \End_{\A}(X)$ 
be such that 
$\vf_P^o$ is the identity map on $P$,    
and $0$-map on $P'$.
We denote by 
$\vf_P$ 
the  element of $\A \cong \End_{\A}(X)^{\opp}$ corresponding to $\vf_P^o$. 
It is clear that 
$\vf_P$ is an idempotent. 
Since 
\begin{align*}
F(X) 
&
\cong \Hom_{\A}(P,P) \oplus \Hom_{\A}(P,P') 
\\
&
\cong \End_{\A}(X) \vf_P^o
\\
&
\cong \vf_P \A
\end{align*}
as right $\A$-modules, 
we have   following isomorphisms of algebras: 
\begin{align*}
\End_{\A^{\opp}}(F(X)) 
&
\cong \End_{\A^{\opp}}(\vf_p \A)
\\
& 
\cong \vf_P \A \vf_P 
\\
&
\cong \big( \vf_P^o \End_{\A}(X) \vf_P^o \big)^{\opp} 
\\
&
\cong \End_{\A}(P)^{\opp}
\\
&
= \B.
\end{align*}

Thus, 
we have the double centralizer property: 
\begin{align}
\label{double centralizer property}
\A \cong \End_{\B}(F(X))^{\opp}, 
\quad 
\B \cong \End_{\A^{\opp}}(F(X)).
\end{align}
This double centralizer property 
implies the isomorphism 
$Z(\A) \ra Z(\B)$, 
where 
$Z(\A)$ (resp. $Z(\B)$)  
is the center of $\A$ (resp. $\B$). 
Thus, 
there exists a bijection 
between blocks of 
$\A$ and of $\B$.

\para 
Recall that 
$\A$ is a quasi-hereditary algebra. 
Let 
$\{\D(\la) \,|\, \la \in \vL^+\}$ 
be the set of standard modules, 
and 
$\{\nabla (\la) \,|\, \la \in \vL^+\}$ 
be the set of costandard modules 
of $\A$. 
For  
$\la \in \vL^+$, 
let 
$L(\la)$ be 
the unique simple top of $\D(\la)$, 
and 
$P(\la)$ 
be the projective cover of $L(\la)$. 
Then 
$\{L(\la)\,|\, \la \in \vL^+\}$ 
gives a complete set of non-isomorphic 
simple $\A$-modules. 

For 
$\la \in \vL^+$, 
put 
$S(\la) = F(\D(\la))$, 
$D(\la) = F(L(\la))$ 
and 
$\vL^+_0 =\{\la \in \vL^+\,|\, D(\la) \not=0\}$.  
Since 
$\B \cong \vf_P \A \vf_P$ 
and 
$F=\Hom_{\A}(P,-) =\Hom_{\A}(\A \vf_P,-)$, 
the following lemma is well-known 
(see e.g. \cite[Appendix]{Don}). 

\begin{lem}\
\label{lem projective simple B}
\begin{enumerate}
\item 
For $\la \in \vL^+_0$, 
we have 
$D(\la) \cong \Top F(P(\la)) \cong \Top S(\la)$. 

\item 
$\{F(P(\la)) \,|\, \la \in \vL_0^+ \}$ 
gives a complete set of non-isomorphic indecomposable projective $\B$-modules.

\item 
$\{D(\la) \,|\, \la \in \vL_0^+ \}$ 
gives a complete set of non-isomorphic simple $\B$-modules. 
\end{enumerate}
\end{lem}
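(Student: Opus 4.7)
The plan is to exploit two facts: $F$ is exact (because $P$ is projective), and by (A1) it is fully faithful on $\A\cproj$, so $\End_\B(F(P(\la))) \cong \End_\A(P(\la))$ is local; hence $F(P(\la))$ is indecomposable whenever it is nonzero. Everything else will follow from Krull--Schmidt applied to $\B = F(P)$.

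For part (i), I would apply the exact functor $F$ to the two successive surjections $P(\la) \twoheadrightarrow \D(\la) \twoheadrightarrow L(\la)$, obtaining $F(P(\la)) \twoheadrightarrow S(\la) \twoheadrightarrow D(\la)$. For $\la \in \vL^+_0$ all three modules are nonzero. Since $F(P(\la))$ is indecomposable and surjects onto the simple module $D(\la)$, its unique simple top must be $D(\la)$. A standard fact --- a nonzero quotient of a module with simple top inherits the same simple top --- then yields $\Top S(\la) \cong D(\la)$.

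For part (ii), the key observation is that $F(P) = \Hom_\A(P,P) = \End_\A(P)$ coincides, as a left $\B$-module, with the regular module $\B$. Decomposing $P = \bigoplus_\la P(\la)^{n_\la}$ and applying $F$ yields $\B = \bigoplus_\la F(P(\la))^{n_\la}$, and $n_\la > 0$ is equivalent to $\Hom_\A(P, L(\la)) \ne 0$, i.e.\ to $\la \in \vL^+_0$. By the opening observation, each nonzero $F(P(\la))$ is indecomposable, so this is a Krull--Schmidt decomposition of $\B$ into indecomposable projective $\B$-modules. Hence each $F(P(\la))$ with $\la \in \vL^+_0$ is an indecomposable projective $\B$-module. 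Distinct $\la, \mu \in \vL^+_0$ give non-isomorphic $F(P(\la))$ and $F(P(\mu))$: by (A1), any isomorphism would lift to $P(\la) \cong P(\mu)$. Finally, every indecomposable projective $\B$-module is, over a finite-dimensional algebra, a summand of $\B$, so the list is complete.

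Part (iii) is then formal: $Q \mapsto \Top Q$ gives a bijection between isomorphism classes of indecomposable projective $\B$-modules and of simple $\B$-modules. Combining this with (ii) and $\Top F(P(\la)) \cong D(\la)$ from (i) identifies $\{D(\la) : \la \in \vL^+_0\}$ as a complete list of non-isomorphic simple $\B$-modules. The only delicate point I anticipate is the identification of $F(P)$ with the regular left $\B$-module in part (ii) --- which requires carefully tracking how $\B = \End_\A(P)^{\opp}$ acts on the left of $\Hom_\A(P,-)$ via precomposition --- but once that is in hand, everything else is essentially formal from (A1), exactness, and Krull--Schmidt.
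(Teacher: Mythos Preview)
Your argument for (ii) is correct and nicely self-contained: identifying $F(P)$ with the left regular module $\B$, decomposing via Krull--Schmidt, and using (A1) to see indecomposability and pairwise non-isomorphism all work exactly as you say.

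The gap is in (i), and it makes your derivation of (iii) circular. You write ``surjects onto the simple module $D(\la)$'', but $D(\la)$ is by definition $F(L(\la))$, and nothing you have established so far tells you it is simple --- that is precisely the content of (iii). Moreover, ``indecomposable and surjects onto a simple'' does not by itself force the top to be simple (think of an injective envelope of a simple over a non-semisimple algebra). What you actually get from your surjection $F(P(\la))\twoheadrightarrow D(\la)$, once you know from (ii) that $F(P(\la))$ is indecomposable projective, is only that $\Top F(P(\la))$ is a \emph{quotient} of $D(\la)$; you still owe the equality.

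The paper handles this by a different route: it records that $\B\cong \vf_P\A\vf_P$ and $F\cong \vf_P\cdot(-)$, and then cites the standard Schur--functor fact (Donkin's appendix) that for any simple $\A$-module $L$, the module $\vf_P L$ is either zero or simple over $\vf_P\A\vf_P$. That single statement gives the simplicity of $D(\la)$ directly and breaks the circularity. If you want to stay within your (A1)/Krull--Schmidt framework and avoid the idempotent argument, you can close the gap with a dimension count after proving (ii): writing $P=\bigoplus_\mu P(\mu)^{n_\mu}$ and using (A1) to identify $\End_\B(\Top F(P(\la)))$ with $\End_\A(L(\la))$, one finds $\dim D(\la)=n_\la\cdot\dim\End_\A(L(\la))=\dim \Top F(P(\la))$, so the surjection $D(\la)\twoheadrightarrow \Top F(P(\la))$ is an isomorphism.
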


\para 
For $\la, \mu \in \vL^+$, 
we denote by 
$P(\la) \sim P(\mu)$ 
if there exists a sequence 
$\la =\la_1, \la_2, \cdots, \la_{k+1}=\mu$ ($\la_i \in \vL^+$) 
such that 
$P(\la_i)$ and $P(\la_{i+1})$ have a common composition factor 
for any $i=1,\cdots,k$. 
Then 
\lq\lq $\sim$" gives an equivalent relation on 
$\{P(\la) \,|\, \la \in \vL^+\}$. 
It is well-known that 
$P(\la) \sim P(\mu)$ if and only if $P(\la)$ and $P(\mu)$ belong to the same block of $\A$. 
Similarly, 
we define an equivalent relation \lq\lq $\sim$" on 
$\{F(P(\la)) \,|\, \la \in \vL^+_0 \}$, 
and we have that  
$F(P(\la)) \sim F(P(\mu))$ 
if and only if 
$F(P(\la))$ and $F(P(\mu))$ belong to the same block of $\B$. 
We have the following lemma.

\begin{lem}
\label{lem P la sim P mu if and only if F P la sim P mu} 
For $\la, \mu \in \vL^+_0$, 
we have 
\[ P(\la) \sim P(\mu) 
\quad \text{ if and only if } 
\quad 
F (P(\la)) \sim F (P(\mu)). 
\]
\end{lem}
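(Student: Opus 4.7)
The plan is to interpret the equivalence relation $\sim$ on both sides as the partition into blocks, and then to exploit the double centralizer property (1.1.2) to match the blocks of $\A$ with those of $\B$. Recall the standard fact (attributed implicitly in the paragraph preceding the lemma) that $P(\la) \sim P(\mu)$ is equivalent to $P(\la)$ and $P(\mu)$ lying in the same block of $\A$, and similarly for $\B$. So the assertion reduces to showing that for $\la \in \vL^+_0$, the block of $\B$ containing $F(P(\la))$ is precisely the image of the block of $\A$ containing $P(\la)$ under the bijection induced by $Z(\A) \cong Z(\B)$.

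The next step is to make that bijection explicit. Using $P = \A \vf_P$ we identify $F(M) = \Hom_\A(\A\vf_P, M) \cong \vf_P M$ naturally in $M \in \A \cmod$. The isomorphism coming from (1.1.2) is implemented by $z \mapsto \vf_P z = z \vf_P \in \vf_P \A \vf_P \cong \B$. A direct calculation (using centrality of $z$ together with $\vf_P^2 = \vf_P$) shows that if $e \in Z(\A)$ has image $\bar e \in Z(\B)$, then
\[
F(eM) \;=\; \vf_P e M \;=\; \bar e \cdot \vf_P M \;=\; \bar e\, F(M)
\]
for every $M \in \A \cmod$. Thus $F$ is compatible with the action of the centers, and in particular respects the decompositions of modules into central-idempotent components.

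The next ingredient is that the map $e \mapsto \bar e$ sends primitive central idempotents of $\A$ to primitive central idempotents of $\B$ or to $0$, and that $\bar e = 0$ iff $e \vf_P = 0$, iff the block $e\A$ contains no summand of $P$, iff it contains no simple $L(\la)$ with $\la \in \vL^+_0$ (using Lemma \ref{lem projective simple B}(i), which identifies the indecomposable summands of $P$ with those $P(\la)$ for $\la \in \vL^+_0$). Hence for $\la \in \vL^+_0$ the block idempotent $e_\la$ of $P(\la)$ satisfies $\bar e_\la \neq 0$, and by the compatibility above $F(P(\la)) = \bar e_\la F(P(\la))$, so $F(P(\la))$ lies in the $\B$-block $\bar e_\la \B$.

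Putting these steps together, for $\la, \mu \in \vL^+_0$ we get
\[
P(\la) \sim P(\mu) \;\Longleftrightarrow\; e_\la = e_\mu \;\Longleftrightarrow\; \bar e_\la = \bar e_\mu \;\Longleftrightarrow\; F(P(\la)) \sim F(P(\mu)),
\]
which is the claim. The one nontrivial point in the argument is the compatibility $F(eM) = \bar e F(M)$ of $F$ with the center isomorphism (more precisely, the careful bookkeeping that central elements commute past $\vf_P$ and that $\bar e$ acts on $F(M)$ as the restriction of $e$); everything else is formal, and the case $\la \notin \vL^+_0$ is avoided because the hypothesis places us precisely in the range where $\bar e_\la \neq 0$.
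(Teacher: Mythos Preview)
Your proof is correct, and it takes a genuinely different (and cleaner) route than the paper's. The paper proves the two directions separately: for ``only if'' it decomposes the progenerator $X$ along the block idempotents of $\A$, applies $F$, and uses the double centralizer property to argue that $\End_{\B}(F(X_i))^{\opp}$ is already a block of $\A$, so $F(X_i)$ cannot split into $\B$-submodules with disjoint composition factors; for ``if'' it uses condition (A1) directly, lifting nonzero $\Hom_{\B}(F(P(\nu)),F(P(\la)))$ back to nonzero $\Hom_{\A}(P(\nu),P(\la))$. By contrast, you package both directions into the single observation that the center isomorphism $Z(\A)\cong Z(\B)$ is realized by $z\mapsto \vf_P z$ and that $F(eM)=\bar e\,F(M)$, so $F$ literally intertwines the block decompositions; the equivalence then falls out symmetrically. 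One small comment: since $Z(\A)\to Z(\B)$ is an isomorphism, no primitive central idempotent of $\A$ is sent to $0$, so your ``or to $0$'' case is in fact empty (equivalently, every block of $\A$ contains some $L(\la)$ with $\la\in\vL^+_0$); your separate verification that $\bar e_\la\neq 0$ for $\la\in\vL^+_0$ is therefore correct but superfluous. Your approach buys a symmetric and essentially one-line argument once the center isomorphism is granted; the paper's approach is more hands-on and, in the ``if'' direction, makes the role of (A1) explicit rather than hiding it inside the double centralizer property.
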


\begin{proof}
Let 
$1_\A$ be the unit element of $\A$, 
and 
$1_\A =e_1 + \cdots + e_k$ 
be the decomposition 
of $1_\A$ 
to the sum of primitive central idempotents. 
Then we have the decomposition 
$X=X_1 \oplus \cdots \oplus X_k$, 
where 
$X_i = e_i X$ 
belong to a block of $\A$. 
Then we have the decomposition of $\A$ to blocks: 
\begin{align}
\label{block decom A}
\A 
&\cong \End_{\B} (F(X))^{\opp}  
\\
&
=\End_{\B}(F(X_1))^{\opp} \oplus \cdots \oplus \End_{\B}(F(X_k))^{\opp}
\notag
\end{align} 
since 
$\Hom_{\B}(F(X_i),F(X_j)) \cong \Hom_{\CA}(X_i,X_j) =0$ 
if $i \not=j$. 
Assume that 
\[
F(X_i)\cong C_1 \oplus C_2 
\text{ as $\B$-modules}
\] 
such that 
$C_1$ and $C_2$ have no common composition factor. 
Then we have that 
\[
\End_{\B}(F(X_i)) = \End_{\B}(C_1) \oplus \End_{\B}(C_2) 
\text{ as two sided ideals of $\A$}.
\] 
However, this contradicts to the decomposition 
\eqref{block decom A}. 
Thus we have that 
all composition factors of $F(X_i)$ belong to the same block of $\CB$. 
This implies that 
\[ 
P(\la) \sim P(\mu) \text{ only if } F(P(\la)) \sim F(P(\mu)) \quad \text{ for } \la,\mu \in \vL^+_0.
\]

On the other hand,  
for $\la, \mu \in \vL^+_0$, 
assume that 
$F(P(\la))$ and $F(P(\mu))$ 
have a common composition factor 
$D(\nu)$. 
Then we have 
\begin{align*}
&\dim \Hom_{\B}\big( F (P(\nu)) , F (P(\la)) \big) = [F (P(\la)) : D(\nu)] \not=0, \notag \\
&\dim \Hom_{\B}\big( F (P(\nu)) , F (P(\mu)) \big) = [F (P(\mu)) : D(\nu)] \not=0.  \notag
\end{align*}
By
the assumption (A1), 
we have that 
\begin{align*}
&\Hom_{\A}\big( P(\nu) , P(\la) \big) = \Hom_{\B}\big( F (P(\nu)) , F (P(\la)) \big) \not=0, \notag \\
&\Hom_{\A}\big( P(\nu) , P(\mu) \big) = \Hom_{\B}\big( F (P(\nu)) , F (P(\mu)) \big)  \not=0.  \notag
\end{align*}
Thus, we have that 
\begin{align*} 
P(\la) \sim P(\mu) \quad \text{if} \quad F(P(\la)) \sim F(P(\mu)) \quad \text{ for } \la,\mu \in \vL^+_0.
\end{align*}
Now we proved the Lemma.
\end{proof}

\remark 
In the proof of the above lemma, 
we also showed that,  
for $\la \in \vL^+ \setminus \vL^+_0$, 
a composition factor of $F(P(\la))$ 
is isomorphic to 
$D(\mu)$ 
for some $\mu \in \vL^+_0$ 
such that 
$P(\la) \sim P(\mu)$.

As a corollary, we have the following. 

\begin{cor}
\label{cor S la belong to block}
For each $\la \in \vL^+$, 
all composition factors of $S(\la)$ belong to the same block of $\B$. 
\end{cor}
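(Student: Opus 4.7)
The plan is to reduce the corollary to Lemma \ref{lem P la sim P mu if and only if F P la sim P mu} and the remark following it, by passing through the projective cover of the standard module. First I would observe that since $L(\la)$ is the unique simple top of $\D(\la)$ and $P(\la)$ is the projective cover of $L(\la)$, there is a surjection $P(\la) \twoheadrightarrow \D(\la)$. The functor $F=\Hom_\A(P,-)$ is exact because $P$ is projective, so applying $F$ produces a surjection $F(P(\la)) \twoheadrightarrow S(\la)$. In particular, every composition factor of $S(\la)$ is a composition factor of $F(P(\la))$.

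It therefore suffices to show that all composition factors of $F(P(\la))$ lie in a single block of $\B$. This is exactly the content of the remark following Lemma \ref{lem P la sim P mu if and only if F P la sim P mu}: every composition factor of $F(P(\la))$ is of the form $D(\mu)$ for some $\mu \in \vL^+_0$ with $P(\mu) \sim P(\la)$. By that lemma, $P(\mu) \sim P(\la)$ implies $F(P(\mu)) \sim F(P(\la))$, and $D(\mu) \cong \Top F(P(\mu))$ by Lemma \ref{lem projective simple B}(\roi). Hence $D(\mu)$ belongs to the block of $\B$ containing $F(P(\la))$, a block that does not depend on the chosen composition factor.

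I do not expect a serious obstacle, since the corollary is essentially a bookkeeping consequence of the previous lemma and its remark. The only point requiring care is the case $\la \in \vL^+ \setminus \vL^+_0$, where $D(\la)=0$ and so $S(\la)$ does not contain $D(\la)$ as a composition factor; but this case is automatically handled, because the argument routes composition factors of $S(\la)$ through $F(P(\la))$ rather than through $D(\la)$ itself. Thus the proof reduces to citing the projective cover surjection, the exactness of $F$, and the remark controlling composition factors of $F(P(\la))$.
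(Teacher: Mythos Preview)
Your argument is correct, but it takes a slightly longer route than the paper's. The paper argues directly from $\D(\la)$ rather than from its projective cover: since $\D(\la)$ is indecomposable, all of its composition factors $L(\mu_i)$ lie in a single block of $\A$; applying the exact functor $F$ to a composition series of $\D(\la)$ yields a filtration of $S(\la)$ with subquotients $D(\mu_i)=F(L(\mu_i))$, and Lemma~\ref{lem P la sim P mu if and only if F P la sim P mu} then forces the nonzero $D(\mu_i)$ into a single block of $\B$. Your passage through the surjection $F(P(\la))\twoheadrightarrow S(\la)$ is valid but unnecessary, since every composition factor of $S(\la)$ already arises as $F$ applied to a composition factor of $\D(\la)$.

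One small citation issue: the remark after Lemma~\ref{lem P la sim P mu if and only if F P la sim P mu} is stated only for $\la\in\vL^+\setminus\vL_0^+$, so it does not literally cover the case $\la\in\vL_0^+$ that you also need. The fact you actually want---that all composition factors of $F(P(\la))$ lie in one block of $\B$ for \emph{every} $\la$---is the statement ``all composition factors of $F(X_i)$ belong to the same block of $\B$'' established inside the proof of that lemma. Either cite that line of the proof, or supply the easy direct argument for $\la\in\vL_0^+$: if $D(\mu)$ is a composition factor of $F(P(\la))$ then $\Hom_{\B}(F(P(\mu)),F(P(\la)))\neq 0$, hence $\Hom_{\A}(P(\mu),P(\la))\neq 0$ by (A1), so $P(\mu)\sim P(\la)$.
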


\begin{proof}
Since 
$\D(\la)$ 
is an indecomposable $\A$-module, 
all composition factors of $\D(\la)$ 
belong to the same block of $\A$. 
Thus, 
Lemma  \ref{lem P la sim P mu if and only if F P la sim P mu} 
implies the corollary 
since 
a composition series of $S(\la)$ as $\B$-modules 
is obtained from 
a composition series of $\D(\la)$ as $\A$-modules 
by applying the functor $F$. 
\end{proof}

%

%


\para 
From now on, 
we assume the following additional condition: 
\begin{description}
\item[(A2)] 
$[\D(\la)] = [\nabla (\la)]$ in $K_0(\A \cmod)$ for any $\la \in \vL^+$. 
\end{description}

By the general theory of quasi-hereditary algebras, 
for $\la \in \vL^+$, 
$P(\la)$ 
has a $\D$-filtration 
such that 
$(P(\la) : \D(\mu)) = [\nabla (\mu) : L(\la)]_{\A}$, 
where 
$(P(\la) : \D(\mu))$ 
is the multiplicity of $\D(\mu)$ 
in a $\D$-filtration of $P(\la)$. 
Combining with the assumption (A2), 
we have that 
\begin{align}
\label{P la De mu = De mu L la in A}
(P(\la) : \D(\mu)) = [\D(\mu) : L(\la)]_{\A}.
\end{align}
This implies the following lemma.

\begin{lem}
\label{lem B-H reciprocity in B} 
For $\la, \mu \in \vL^+_0$, we have 
\[ 
[F (P(\la)) : D(\mu)]_{\B} =\sum_{\nu \in \vL^+} [S(\nu) : D(\la)]_{\B} [ S(\nu) : D(\mu)]_{\B}.
\]
\end{lem}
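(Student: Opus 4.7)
The plan is to apply the exact functor $F$ to a $\Delta$-filtration of $P(\la)$ and then read off the composition multiplicities in the Grothendieck group of $\B\cmod$, using \eqref{P la De mu = De mu L la in A} to match the two sides.

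First, by the general theory of quasi-hereditary algebras together with assumption (A2), the projective $P(\la)$ admits a $\Delta$-filtration whose multiplicities are computed in \eqref{P la De mu = De mu L la in A}:
\[
(P(\la) : \Delta(\nu)) = [\Delta(\nu) : L(\la)]_{\A}.
\]
Since $F$ is exact, applying it term by term to this filtration produces an $S$-filtration of $F(P(\la))$ with the same multiplicities, so in $K_0(\B\cmod)$
\[
[F(P(\la))] = \sum_{\nu \in \vL^+} [\Delta(\nu) : L(\la)]_{\A}\, [S(\nu)].
\]

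Next, I would identify the coefficient $[\Delta(\nu) : L(\la)]_{\A}$ with $[S(\nu) : D(\la)]_{\B}$ for $\la \in \vL^+_0$. Applying the exact functor $F$ to a composition series of $\Delta(\nu)$ yields a filtration of $S(\nu) = F(\Delta(\nu))$ whose sections are $F(L(\eta)) = D(\eta)$; by Lemma \ref{lem projective simple B}(iii), $D(\eta) = 0$ unless $\eta \in \vL^+_0$, and the $\{D(\eta)\}_{\eta \in \vL^+_0}$ exhaust the simples of $\B$. Hence
\[
[S(\nu) : D(\la)]_{\B} = [\Delta(\nu) : L(\la)]_{\A} \qquad (\la \in \vL^+_0).
\]

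Substituting this identity twice (once to rewrite the coefficient, once implicitly inside $[S(\nu)]$) and taking the multiplicity of $D(\mu)$ in the resulting expression for $[F(P(\la))]$ gives
\[
[F(P(\la)) : D(\mu)]_{\B} = \sum_{\nu \in \vL^+} [S(\nu) : D(\la)]_{\B}\, [S(\nu) : D(\mu)]_{\B},
\]
as desired. The only subtle point is the symmetry of the formula in $\la$ and $\mu$: this symmetry is precisely what makes the statement look like a BGG/Brauer--Humphreys reciprocity, and it is produced by the coincidence of two a priori different quantities — the filtration multiplicity $(P(\la) : \Delta(\nu))$ and the composition multiplicity $[S(\nu) : D(\la)]_{\B}$ — both of which equal $[\Delta(\nu):L(\la)]_{\A}$. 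I expect the main thing to be careful about is distinguishing composition multiplicities in $\A\cmod$ and in $\B\cmod$ and confirming that the discarded simples (those with $\la \notin \vL^+_0$) do not contribute on the $\B$-side.
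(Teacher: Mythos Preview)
Your proposal is correct and follows essentially the same approach as the paper: apply the exact functor $F$ to a $\Delta$-filtration of $P(\la)$, use \eqref{P la De mu = De mu L la in A} to identify the filtration multiplicities as $[\Delta(\nu):L(\la)]_{\A}$, and observe that this equals $[S(\nu):D(\la)]_{\B}$ since $F$ sends a composition series of $\Delta(\nu)$ to a filtration of $S(\nu)$ with sections $D(\eta)$. Your write-up is in fact slightly more explicit than the paper's about justifying the identity $[\Delta(\nu):L(\la)]_{\A}=[S(\nu):D(\la)]_{\B}$, which the paper asserts without elaboration.
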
 

\begin{proof}
Let 
\[ 
P(\la) =P_k \supset P_{k-1} \supset \cdots \supset P_{1} \supset P_0=0 
\,\, \text{ such that }\,\,  P_i / P_{i-1} \cong \D(\nu_i) 
\]
be a $\D$-filtration of $P(\la)$. 
By applying the functor $F$ to this filtration, 
we have a filtration of $F(P(\la))$ (as $\B$-modules) 
\[ 
F(P(\la)) = F(P_k) \supset F(P_{k-1}) \supset \cdots \supset F(P_{1}) \supset F(P_0)=0 
\]
such that 
$F (P_i) / F (P_{i-1}) \cong S(\nu_i)$. 
In this filtration, 
for $\nu \in \vL^+$, 
$S(\nu)$ appears $[\D(\nu) : L(\la)]_{\A} = [S(\nu) : D(\la)]_{\B}$ times 
by \eqref{P la De mu = De mu L la in A}. 
Thus, 
we have 
\[ 
[ F (P(\la)) : D(\mu) ]_{\B} = \sum_{\nu \in \vL^+} [S(\nu) : D(\la)]_{\B} [S(\nu) : D(\mu)]_{\B}. 
\\[-2.5em]
\]
\end{proof}

\para 
For $\la, \mu \in \vL^+$, 
we denote by 
$S(\la) \sim S(\mu)$ 
if there exists a sequence 
$\la =\la_1, \la_2, \cdots, \la_{k+1}=\mu$ ($\la_i \in \vL^+$) 
such that 
$S(\la_i)$ and $S(\la_{i+1})$ have a common composition factor 
for any $i=1,\cdots,k$. 
Then 
\lq\lq $\sim$" gives an equivalent relation on 
$\{S(\la) \,|\, \la \in \vL^+\}$. 
Similarly, 
we define an equivalent relation \lq\lq $\sim$" 
on $\{ \D(\la) \,|\, \la \in \vL^+\}$. 
Note that 
all composition factors of $\D(\la)$ 
belong to the same block of $\A$ 
since 
$\D(\la)$ is indecomposable.  

Corollary \ref{cor S la belong to block} 
and 
Lemma \ref{lem B-H reciprocity in B}  
imply the following proposition. 

\begin{prop}
\label{prop Sla Smu block}
For $\la, \mu \in \vL^+$ we have the following. 
\begin{enumerate}
\item   
$S(\la) \sim S(\mu)$ if and only if  
$S(\la)$ and $S(\mu)$  
belong to the same block of $\B$. 
\item 
$\D(\la) \sim \D(\mu)$ if and only if  
$\D(\la)$ and $\D(\mu)$  
belong to the same block of $\A$. 
\end{enumerate}
\end{prop}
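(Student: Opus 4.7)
The plan is to prove both biconditionals by separating the two directions. The forward direction (``$\sim$ implies same block'') is immediate in both cases: for (ii) it follows from the indecomposability of $\D(\la)$, and for (i) it is exactly Corollary \ref{cor S la belong to block}. Hence a chain of consecutive modules sharing a composition factor forces all members of the chain into a single block.

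For the converse of (ii), I would assume $\D(\la)$ and $\D(\mu)$ lie in the same block of $\A$. Since $L(\la) = \Top \D(\la)$, the projective covers $P(\la), P(\mu)$ lie in the same block, so $P(\la) \sim P(\mu)$, and I may pick a chain $\la = \la_1, \dots, \la_{k+1} = \mu$ with $P(\la_i)$ and $P(\la_{i+1})$ sharing a composition factor $L(\nu_i)$. Computing composition multiplicities from the $\D$-filtration of $P(\la_i)$ via \eqref{P la De mu = De mu L la in A} yields
\[ [P(\la_i):L(\nu_i)]_\A \,=\, \sum_{\tau \in \vL^+} [\D(\tau):L(\la_i)]_\A \,[\D(\tau):L(\nu_i)]_\A, \]
so some $\D(\tau_i)$ contains both $L(\la_i)$ and $L(\nu_i)$ as composition factors. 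This gives $\D(\la_i) \sim \D(\tau_i)$ via the shared factor $L(\la_i) = \Top\D(\la_i)$; similarly some $\D(\tau'_i)$ gives $\D(\la_{i+1}) \sim \D(\tau'_i)$, and $\D(\tau_i) \sim \D(\tau'_i)$ via $L(\nu_i)$. Concatenating across $i$ produces $\D(\la) \sim \D(\mu)$.

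For the converse of (i), the argument runs in parallel using Lemma \ref{lem B-H reciprocity in B} in place of \eqref{P la De mu = De mu L la in A}. I would pick composition factors $D(\nu)$ of $S(\la)$ and $D(\xi)$ of $S(\mu)$. These simples lie in one block, so by Lemma \ref{lem projective simple B} their projective covers $F(P(\nu)), F(P(\xi))$ also do, giving a chain $\nu = \g_1, \dots, \g_{k+1} = \xi$ in $\vL^+_0$ with $F(P(\g_i))$ and $F(P(\g_{i+1}))$ sharing a composition factor $D(\d_i)$. Applying Lemma \ref{lem B-H reciprocity in B} to each side produces $\tau_i, \tau'_i \in \vL^+$ such that $D(\g_i), D(\d_i)$ appear together in $S(\tau_i)$ and $D(\g_{i+1}), D(\d_i)$ appear together in $S(\tau'_i)$, yielding the chain $S(\g_i) \sim S(\tau_i) \sim S(\tau'_i) \sim S(\g_{i+1})$. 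The final bridging links $S(\la) \sim S(\nu)$ and $S(\xi) \sim S(\mu)$ use that $D(\nu) = \Top S(\nu)$ (Lemma \ref{lem projective simple B}(i)) is a shared composition factor with $S(\la)$, and similarly on the $\mu$ side.

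The main obstacle is the converse of (i): the reciprocity formula only delivers joint occurrences of $D(\g_i)$ and $D(\d_i)$ inside some $S(\tau_i)$, not a direct overlap between $S(\g_i)$ and $S(\g_{i+1})$. What unlocks the argument is the identity $D(\g) = \Top S(\g)$ from Lemma \ref{lem projective simple B}(i), which lets one bridge from $S(\g_i)$ to $S(\tau_i)$ through the shared factor $D(\g_i)$ and so assemble the desired chain one segment at a time.
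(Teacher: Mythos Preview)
Your argument is correct and follows essentially the same approach as the paper: the forward directions are immediate from indecomposability and Corollary~\ref{cor S la belong to block}, and the converse directions run via a chain of projective covers, invoking the reciprocity formulas \eqref{P la De mu = De mu L la in A} and Lemma~\ref{lem B-H reciprocity in B} to produce intermediate $\tau_i, \tau'_i$ that bridge the chain through shared composition factors. Your chain $S(\g_i) \sim S(\tau_i) \sim S(\tau'_i) \sim S(\g_{i+1})$ is even slightly tighter than the paper's, which inserts an extra link $S(\nu_i)$ between $S(\tau_i)$ and $S(\tau'_i)$, but the substance is identical.
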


\begin{proof}
(\roi) 
The \lq \lq only if " part is clear by Corollary \ref{cor S la belong to block}. 
We prove the \lq \lq if " part. 
Suppose that 
$S(\la)$ and $S(\mu)$ belong to the same block of $\B$. 
Let 
$D(\la')$ (resp. $D(\mu')$) 
be a composition factor of 
$S(\la)$ (resp. $S(\mu)$). 
Then 
$D(\la')$ and $D(\mu')$ 
belong to the same block of $\B$.  
This implies that 
$F(P(\la')) \sim F(P(\mu'))$. 
Thus 
there exists a sequence 
$\la'=\la_1, \la_2, \cdots, \la_{k+1}=\mu'$ ($\la_i \in \vL^+_0$) 
such that 
$\B (P(\la_i))$ and $\B (P(\la_{i+1}))$ 
have a common composition factor 
$D(\nu_i)$ 
for any $i=1,\cdots,k$. 
By Lemma \ref{lem B-H reciprocity in B}, 
this implies that 
there exist 
\begin{align*}
&\t_i \in \vL^+ 
\text{ such that } [ S(\t_i) : D(\la_i)]_{\B} \not=0 \text{ and } [S(\t_i) : D(\nu_i)]_{\B} \not=0, \\
&
\t'_i \in \vL^+ 
\text{ such that } [ S(\t'_i) : D(\la_{i+1})]_{\B} \not=0 \text{ and } [S(\t'_i) : D(\nu_i)]_{\B} \not=0
\end{align*}
for any $i=1,\cdots,k$. 
Thus we have 
\[ S(\la_i) \sim S(\t_i) \sim S(\nu_i) \sim S(\t'_i) \sim S(\la_{i+1})\]
for any $i=1,\cdots,k$.
This implies that 
\[ S(\la) \sim S(\la')=S(\la_1) \sim S(\la_2) \sim \cdots \sim S(\la_{k+1}) = S(\mu') \sim S(\mu).\]

(\roii) is proven in a similar way by using \eqref{P la De mu = De mu L la in A} instead of 
Lemma \ref{lem B-H reciprocity in B}. 
\end{proof}

\para 
From now on, 
we assume the following additional condition: 
\begin{description}
\item[(A3)] 
$S(\la) = F(\D(\la)) \not=0$ for any $\la \in \vL^+$.
\end{description}

Thanks to Proposition \ref{prop Sla Smu block},  
we can classify blocks of $\B$ (resp. blocks of $\A$) 
by equivalent classes of $\{S(\la) \,|\, \la \in \vL^+\}$ 
(resp. $\{\D(\la) \,|\, \la \in \vL^+\}$) 
with respect to the relation \lq \lq $\sim$". 
The following proposition gives a relation between blocks of $\A$ and of $\B$.

\begin{prop}
\label{prop equivalent D la sim D mu S la sim S mu}
For $\la, \mu \in \vL^+$, 
we have 
\[ \D(\la) \sim \D(\mu) \,\, \text{ if and only if } S(\la) \sim S(\mu) .\]
\end{prop}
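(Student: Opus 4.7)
The plan is to reduce the assertion to Lemma \ref{lem P la sim P mu if and only if F P la sim P mu} by translating both equivalence relations into block-equivalences, and then to extend from $\vL^+_0$ to $\vL^+$ using assumption (A3) and the remark following Lemma \ref{lem P la sim P mu if and only if F P la sim P mu}.

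First I would treat $\la,\mu \in \vL^+_0$. By Proposition \ref{prop Sla Smu block}, the relations $\D(\la)\sim \D(\mu)$ and $S(\la)\sim S(\mu)$ are equivalent to $\D(\la),\D(\mu)$ lying in the same block of $\A$, respectively to $S(\la),S(\mu)$ lying in the same block of $\B$. Since $L(\la)$ is the unique simple top of both $P(\la)$ and $\D(\la)$, the $\A$-block of $\D(\la)$ coincides with that of $P(\la)$. Similarly, by Lemma \ref{lem projective simple B}(i), $D(\la)$ is the simple top of both $F(P(\la))$ and $S(\la)$, so --- using Corollary \ref{cor S la belong to block} to ensure $S(\la)$ lies in a single block --- the $\B$-block of $S(\la)$ coincides with that of $F(P(\la))$. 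Combining with the standard block characterizations of the relations $P(\la)\sim P(\mu)$ and $F(P(\la))\sim F(P(\mu))$ recalled just before Lemma \ref{lem P la sim P mu if and only if F P la sim P mu}, and invoking that lemma itself, yields the chain
\[
\D(\la)\sim\D(\mu) \ \Leftrightarrow\ P(\la)\sim P(\mu) \ \Leftrightarrow\ F(P(\la))\sim F(P(\mu)) \ \Leftrightarrow\ S(\la)\sim S(\mu)
\]
for $\la,\mu\in\vL^+_0$.

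To extend to arbitrary $\la,\mu\in \vL^+$, I would argue as follows. By (A3), $S(\la)=F(\D(\la))\not=0$, and since $F$ is exact the surjection $P(\la)\twoheadrightarrow \D(\la)$ gives a surjection $F(P(\la))\twoheadrightarrow S(\la)$, so $F(P(\la))\not=0$ and hence admits a composition factor $D(\la_0)$ for some $\la_0\in\vL^+_0$. The remark after Lemma \ref{lem P la sim P mu if and only if F P la sim P mu} then forces $P(\la)\sim P(\la_0)$, so $\D(\la)$ and $\D(\la_0)$ lie in the same block of $\A$, i.e.\ $\D(\la)\sim \D(\la_0)$. On the $\B$-side, $D(\la_0)$ is a composition factor of $F(P(\la))$ (which shares a block with $S(\la)$) and also the top of $S(\la_0)$, so $S(\la)$ and $S(\la_0)$ lie in the same block of $\B$, i.e.\ $S(\la)\sim S(\la_0)$. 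Choosing an analogous $\mu_0\in\vL^+_0$ for $\mu$ and applying Step 1 to the pair $(\la_0,\mu_0)$ completes the proof.

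The main obstacle is this last reduction to $\vL^+_0$; everything else is essentially bookkeeping around Proposition \ref{prop Sla Smu block} and the tops of $\D(\la)$, $P(\la)$, $S(\la)$, $F(P(\la))$. The heart of the argument is Lemma \ref{lem P la sim P mu if and only if F P la sim P mu}, which already used assumption (A1); here assumption (A3) enters precisely to guarantee that every $\la\in\vL^+$ has a block-equivalent representative in $\vL^+_0$ on both sides.
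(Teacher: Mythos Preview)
Your proposal is correct and follows essentially the same route as the paper: both arguments reduce to Lemma~\ref{lem P la sim P mu if and only if F P la sim P mu} and then invoke Proposition~\ref{prop Sla Smu block} to translate block-equivalence back into the linkage relations~$\sim$. The paper's proof is simply terser: it reads Lemma~\ref{lem P la sim P mu if and only if F P la sim P mu} (together with its proof, which shows $F$ carries each block of $\A$ into a single block of $\B$) as directly giving a block bijection respected by $F$, so that $\D(\la),\D(\mu)$ lie in the same $\A$-block iff $S(\la)=F(\D(\la))$, $S(\mu)=F(\D(\mu))$ lie in the same $\B$-block, and then applies Proposition~\ref{prop Sla Smu block}. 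Your version unpacks this by routing through $P(\la)$, $F(P(\la))$ and an explicit reduction from $\vL^+$ to $\vL^+_0$ via (A3) and the remark; this is a legitimate way to make the implicit step rigorous, but it is not a genuinely different strategy.
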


\begin{proof}
By Lemma \ref{lem P la sim P mu if and only if F P la sim P mu}, 
we have that 
$\D(\la)$ and $\D(\mu)$ belong to the same block of $\A$ 
if and only if 
$S(\la)$ and $S(\mu)$ belong to the same block of $\B$. 
Thus Proposition \ref{prop Sla Smu block} implies this proposition. 
\end{proof}



\section{Rational Cherednik algebras}
\label{RCA}
\para 
Let $V$ be a finite dimensional vector space over $\CC$,  
and 
$W \subset \GL(V)$ 
be a finite complex reflection group. 
Let $\CA$ be the set of reflecting hyperplanes of $W$, 
and 
$\CA/W$ be the set of $W$-orbits of $\CA$.   
For $H \in \CA$, 
let $W_H $ 
be the subgroup of $W$ 
fixing $H$ pointwise,  
and 
put 
$e_H=|W_H|$.  
Take a set 
\[ \Om = 
\{ k_{H,i} \in \CC \,|\, H \in \CA/W, \, 0 \leq i \leq e_H \text{ such that } k_{H,0}=k_{H,e_H}=0\}.\] 
Let 
$\CH$ 
be the rational Cherednik algebra 
associated to $W$ with parameters $\Om$ 
(see \cite[3.1]{GGOR} for definitions). 
By \cite{EG}, 
it is known that 
$\CH$ has the triangular decomposition 
\[ \CH \cong S(V^\ast ) \otimes_{\CC} \CC W \otimes_{\CC} S(V) \quad \text{ as vector spaces}, \]
where 
$S(V)$ (resp. $S(V^\ast)$) is the symmetric algebra of $V$ (resp. the dual space $V^\ast$), 
and 
$\CC W$ is the group ring of $W$ over $\CC$.  

Let 
$\CO$ be 
the category of finitely generated $\CH$-modules 
which are locally nilpotent for the action of $S(V) \setminus \CC$. 
Let 
$\Irr W =\{E^\la \,|\, \la \in \vL^+\}$ 
be a complete set of non-isomorphic simple $\CC W$-modules. 
For $\la \in \vL^+$,  
put 
\[ \D(\la) = \CH \otimes_{S(V) \rtimes W} E^\la,\]
where 
$S(V) \rtimes W \cong S(V) \otimes_{\CC} \CC W$ is a subalgebra of $\CH$, 
and we regard $E^\la$  as a $S(V) \rtimes W$-module 
through the natural surjection $S(V) \rtimes W \ra \CC W$.  
It is known that 
$\CO$ turns out to be a highest weight category with standard modules $\{\D(\la) \,|\, \la \in \vL^+\}$ 
(\cite{GGOR}, \cite{Gu}).  
Let 
$L(\la)$ 
be the unique simple top 
of $\D(\la)$, 
then 
$\{L(\la) \,|\, \la \in \vL^+\}$ 
is a complete set of non-isomorphic simple objects in $\CO$. 
For
$\la \in \vL^+$, 
we denote by 
$P(\la)$ 
the projective cover of $L(\la)$.

\para 
Let 
$\He$ 
be the cyclotomic Hecke algebra of $W$ corresponding $\CH$ 
(see \cite[5.2.5]{GGOR} for the choice of parameters).  
Then 
the Knizhnik-Zamolodchikov functor (simply, KZ functor)  
$\KZ : \CO \ra \He\text{-mod}$ 
is defined in \cite[5.3]{GGOR}. 
KZ functor is a exact functor,  
and 
represented by a projective object 
\[P_{\KZ} = \bigoplus_{ \la \in \vL^+} P(\la)^{\oplus \dim \KZ(L(\la))} \in \CO, \]
namely, we have 
$\KZ = \Hom_{\CO}({P_{\KZ}}, -)$. 
Moreover, 
by \cite[Theorem 5.15]{GGOR}, 
we have 
\[\He \cong (\End_{\CO} (P_{\KZ}))^{\opp}.\]  

By \cite[Theorem 5.16]{GGOR}, 
KZ functor 
is 
fully faithful 
when we restrict to projective objects in $\CO$. 
Thus, 
$\CO$ is a quasi-hereditary cover of $\He$.

Put 
$\A=\End_{\CO}(X)$, 
$\B=\He$ 
and 
$F=\KZ$, 
where $X$ is a progenerator of $\CO$ such that $X=P_{\KZ} \oplus P'$ for some projective object $P'$ in $\CO$. 
Then, 
these satisfy 
assumptions 
(A1),(A2),(A3) 
by \cite{GGOR}. 
Thus, 
all results in \S \ref{section quasi-hereditary}
hold for this setting. 
In particular, 
we put 
$S(\la) = \KZ (\D(\la))$ and 
$D(\la)= \KZ (L(\la))$ for 
$\la \in \vL^+$. 
Let 
$\vL^+_0=\{\la \in \vL^+ \,|\, D(\la) \not=0\}$, 
then 
$\{D(\la) \,|\, \la \in \vL^+_0\}$ 
gives a complete set of 
non-isomorphic simple $\He$-modules.

\para 
In the rest of this section, 
we recall a modular system and a decomposition map described in \cite{GGOR}. 
Let 
$\CC[ \wh{\Om}]$ 
be the polynomial ring 
over $\CC$ 
with indeterminates 
$\wh{\Om} = \{ \bk_{H,i} \,|\, H \in \CA/W,\, 1 \leq i \leq e_{H}-1 \}$.   
We have a homomorphism 
$\vf: \CC[\wh{\Om}] \ra \CC$ of $\CC$-algebras 
such that 
$\bk_{H,i} \mapsto k_{H,i}$.  
Put $\Fm=\Ker \vf$. 
Let 
$R$ 
be the completion of $\CC[ \wh{\Om}]$ at the maximal ideal $\Fm$. 
Then 
$R$ 
is a regular local ring with the unique maximal ideal 
$\wh{\Fm}=\big( (\bk_{H,i} - k_{H,i})_{H \in \CA/W,\,1 \leq i \leq e_H -1} \big)$. 
We have the canonical homomorphism 
$R \ra \CC$ such that $\bk_{H,i} \mapsto k_{H,i}$. 
Let $K$ be the quotient field of $R$. 

Let $\CH_R$ be the rational Cherednik algebra of $W$ over $R$ 
with parameters $\wh{\Om}$ (put $\bk_{H,0}=\bk_{H,e_H}=0$), 
and 
$\He_R$ be the cyclotomic Hecke algebra over $R$ associated to $\CH_R$. 
Then we have 
$\CH = \CC \otimes_R \CH_R$ and $\He = \CC \otimes_R \He_R$. 
Put 
$\CH_K = K \otimes_R \CH_K$ and $\He_K = K \otimes_R \He_R$. 
We denote objects over $X=R$ or $K$ by adding subscript $X$, 
e.g. $\CO_X$, $\D(\la)_X$,  $KZ_X$, $S(\la)_X$ $\cdots$. 

Under the modular system $(K,R,\CC)$, 
we can define the decomposition map 
\[ d_{K,\CC} : K_0(\He_K \cmod) \ra K_0 (\He \cmod)\] 
by 
$[M] \mapsto [\CC \otimes_R N]$, 
where $N$ is an $\He_R$-lattice of $M$.
Thanks to \cite[Theorem 5.13]{GGOR}, we have the following lemma. 

\begin{lem}
\label{lem decomposition map Specht}
For $\la \in \vL^+$, 
we have 
\[ d_{K,\CC}( [ S_K(\la)] ) =[S(\la)] .\]
\end{lem}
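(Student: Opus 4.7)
The plan is to exhibit a natural $\He_R$-lattice of $S(\la)_K$ whose specialization to $\CC$ is isomorphic to $S(\la)$; granted such a lattice, the assertion is immediate from the definition of $d_{K,\CC}$. The obvious candidate is $S(\la)_R := \KZ_R(\D(\la)_R)$, where $\D(\la)_R = \CH_R \otimes_{S(V) \rtimes W \otimes R} (E^\la \otimes_\CC R)$ is the standard module in $\CO_R$.

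First I would record the integral structure of the standard module. The triangular decomposition of $\CH_R$ furnishes an $R$-module isomorphism $\D(\la)_R \cong S(V^\ast) \otimes_\CC E^\la \otimes_\CC R$, so $\D(\la)_R$ is $R$-free (in particular $R$-flat), and the obvious specializations yield $K \otimes_R \D(\la)_R \cong \D(\la)_K$ and $\CC \otimes_R \D(\la)_R \cong \D(\la)$.

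The heart of the argument is the base-change compatibility of the KZ functor recorded in \cite[Theorem 5.13]{GGOR}: for an $R$-flat object $M \in \CO_R$, there are natural isomorphisms $K \otimes_R \KZ_R(M) \cong \KZ_K(K \otimes_R M)$ and $\CC \otimes_R \KZ_R(M) \cong \KZ(\CC \otimes_R M)$. Applying this to $M = \D(\la)_R$ gives $K \otimes_R S(\la)_R \cong S(\la)_K$ and $\CC \otimes_R S(\la)_R \cong S(\la)$. Since $\He_R$ is a free $R$-module of finite rank and $S(\la)_R$ is a finitely generated $\He_R$-module that becomes $S(\la)_K$ after inverting $R$, it is torsion-free and finitely generated over the complete local ring $R$, which together with the above base-change identities identifies $S(\la)_R$ as an $\He_R$-lattice of $S(\la)_K$.

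The conclusion is then immediate: unravelling the definition of the decomposition map gives $d_{K,\CC}([S(\la)_K]) = [\CC \otimes_R S(\la)_R] = [S(\la)]$. The only substantive ingredient is the base-change property of the KZ functor, which is exactly what the cited theorem provides; no further obstacles arise, and the rest is bookkeeping with the triangular decomposition.
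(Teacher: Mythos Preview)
Your proposal is correct and is exactly the argument the paper has in mind: the paper gives no proof but simply attributes the lemma to \cite[Theorem 5.13]{GGOR}, and your write-up unpacks that citation by exhibiting $S(\la)_R=\KZ_R(\D(\la)_R)$ as an $\He_R$-lattice of $S(\la)_K$ and invoking the base-change compatibility of $\KZ$. One small wrinkle worth noting is that $R$ here is a regular complete local ring of dimension $|\wh\Om|$, not a DVR, so ``torsion-free and finitely generated'' by itself would not force $R$-freeness; this is harmless, however, since the cited theorem in \cite{GGOR} already provides the requisite freeness and base-change statements directly.
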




\section{Case of type $G(r,1,n)$}
\label{G(r,1,n)}

In this section, 
we consider the complex reflection group $W$ of type $G(r,1,n)$, i.e. $W=\FS_n \ltimes (\ZZ/r \ZZ)^n$. 
In this case, 
$\He$ is often called the Ariki-Koike algebra, 
and many results for representations of $\He$ are known by several authors. 

\para 
In this section, 
we use the modular system $(K,R,\CC)$ given in the previous section, 
and we take parameters as follows. 

Let 
$V$ be an $n$ dimensional vector space over $\CC$ 
with a basis $\{e_1,\cdots, e_n\}$. 
Then 
we have 
$W \subset \GL(V)$. 
Let 
$s_1, t_1 \in W$ be  reflections  
such that 
\begin{align}
\label{reflections}
s_1(e_k)= 
\begin{cases}
e_2 &\text{ if } k=1, 
\\
e_1 &\text{ if } k=2, 
\\
e_k &\text{ otherwise }, 
\end{cases}
\quad 
t_1(e_k)=
\begin{cases} 
\zeta e_1 &\text{ if } k=1, 
\\ 
e_k &\text{ otherwise }, 
\end{cases}\,\,
(\zeta= \exp(2 \pi \sqrt{-1}/r)), 
\end{align}
and 
$H_{s_1}$ (resp. $H_{t_1}$) 
be the reflecting hyperplane 
corresponding to 
$s_1$ (resp. $t_1$). 
Then 
$\{H_{s_1}, H_{t_1}\}$ 
gives a complete set of representatives of $W$-orbits of $\CA$, 
and 
we have 
$e_{H_{s_1}}=2$ and $e_{H_{t_1}}=r$. 
Hence, 
we can take parameters 
$\{h, k_1,\cdots,k_{r-1}\}$ 
(resp. $\{ \bh, \bk_1,\cdots, \bk_{r-1}\}$) 
of 
$\CH$ 
(resp. $\CH_X$ ($X=R \text{ or } K$)) 
such that 
$h=k_{H_{s_1},1}$ 
(resp. $\bh= \bk_{H_{s_1},1}$) 
and 
$k_j= k_{H_{t_1},j}$ 
(resp. $\bk_j=\bk_{H_{t_1},j}$) 
for $1 \leq j \leq r-1$.  
Then   
$\He$ (resp. $\He_R$, $\He_K$) 
is the associative algebra over $\CC$ (resp. $R$, $K$) 
defined by 
generators 
$T_0,T_1,\cdots,T_{n-1}$ 
with defining relations: 
\begin{align}
&(T_0 - 1)(T_0-Q_1)\cdots (T_0 - Q_{r-1})=0, 
\label{Hecke relation}\\
&(T_0 -1)(T_0+q)=0,  \notag \\
&T_0T_1T_0T_1=T_1T_0T_1T_0,  \notag \\
&T_i T_{i+1} T_i = T_{i+1} T_i T_{i+1} \quad (1\leq i \leq n-1), \notag \\
& T_i T_j =T_j T_i \quad (|i-j|>1), \notag
\end{align} 
where 
$Q_i=\exp (2 \pi \sqrt{-1}(k_i + \frac{i}{r}))$, 
$q=\exp (2 \pi \sqrt{-1} h)$ 
(resp. 
$Q_i=\exp (2 \pi \sqrt{-1}(\bk_i + \frac{i}{r}))$, 
$q=\exp (2 \pi \sqrt{-1} \bh)$).

\para 
Let 
\[ 
\CP_{n,r} = \left\{ \la=(\la^{(1)},\cdots,\la^{(r)}) \Biggm| 
	\begin{matrix} 
		\la^{(k)}=(\la_1^{(k)}, \la_2^{(k)}, \cdots) \in \ZZ_{\geq 0}^n 
			\text{ with } \la_1^{(k)} \geq \la_2^{(k)}\geq \cdots \\[1mm]
		\sum_{k=1}^r \sum_{i \geq 1} \la_{i}^{(k)} =n
	\end{matrix}
\right\} 
\]
be the set of $r$-partitions of size $n$. 
It is well-known that 
the isomorphism classes of simple $\CC W$-modules 
are indexed by $\CP_{n,r}$, 
thus we have 
$\vL^+=\CP_{n,r}$.

\para 
By \cite{DJM}, 
it is known that $\He_X$ ($X=K, R$ or $\CC$, we may omit the subscript $X$ when $X=\CC$) 
is a cellular algebra with respect to a poset $(\vL^+,\trianglerighteq)$, 
where \lq\lq $\trianglerighteq$" is the dominance order on $\vL^+$. 
We denote by 
$S^\la_X$ the Specht (cell) module for $\la \in \vL^+$ 
constructed by using the cellular basis in \cite{DJM}.

It is known that $\He_K$ is semi-simple, 
and 
$\{S^\la_K \,|\, \la \in \vL^+\}$ 
gives a complete set of non-isomorphic simple $\He_K$-modules.

By the general theory of cellular algebras (see \cite{GL} or \cite{M-book}), 
we can define the canonical bilinear form $\lan \, , \, \ran : S^\la \times S^\la \ra \CC$ 
by using the cellular basis. 
Put $\Rad S^\la = \{ x \in S^\la \,|\, \lan x,y \ran=0 \text{ for any } y \in S^\la\}$  
and $D^\la = S^\la / \Rad S^\la$. 
Let 
$\CK_{n,r}$ 
be the set of Kleshchev multi-partitions containing in $\vL^+$ 
(see e.g. \cite{A01} \cite{M} for the definition). 
Then it is known that 
$\{D^\la\,|\, \la \in \CK_{n,r}\}$ 
gives a complete set of non-isomorphic simple $\He$-modules by \cite{A01}.  

It is known that all composition factor of 
$S^\la$ 
belong to the same block of 
$\He$. 
Let \lq\lq $\sim$" be an equivalent relation on 
$\{S^\la \,|\, \la \in \vL^+\}$ 
defined in a similar way as the equivalent relation \lq\lq $\sim$" 
on $\{S(\la)\,|\, \la \in \vL^+\}$ 
in the previous section.   
Then it is known that 
\begin{align}
\label{S la mu block DJM}
S^\la \sim S^\mu \,\, \text{ if and only if } S^\la \text{ and } S^\mu \text{ belong to the same block of } \He.
\end{align}
By \eqref{S la mu block DJM}, 
we can classify the blocks of 
$\He$ 
by the equivalent classes of 
$\{S^\la\,|\, \la \in \vL^+\}$ 
with respect to \lq \lq $\sim$", 
and such equivalent classes are described by using some combinatorics in \cite{LM} as follows.  
For $\la \in \vL^+$, 
put 
\[[\la]=\{(i,j,k)\in \ZZ_{>0}^3 \,|\, 1 \leq j \leq \la^{(k)}_i, 1 \leq k \leq r\}. \]  
For $x=(i,j,k) \in [\la]$, 
we define 
\[ 
\res(x)= 
\begin{cases}
q^{j-i} Q_{k-1} & \text{if }q \not=1 \text{ and }Q_{k-1} \not=0, \\
(j-i, Q_{k-1}) & \text{if }q=1 \text{ and } Q_{l-1} \not=Q_{k-1} \text{ for } k \not=l, \\
Q_{k-1} &\text{otherwise}, 
\end{cases}
\]
where we put $Q_0=1$. 
Put  
$ \Res(\vL^+) = \{ \res(x) \,|\, x \in [\la] \text{ for some } \la \in \vL^+\}$. 
Then, 
we define an equivalent relation (called residue equivalence) \lq\lq $\sim_R$" on $\vL^+$ 
by 
\[
\la \sim_R \mu  \,\, \text{ if }\,\,  
\sharp \{ x \in [\la] \,|\, \res(x)=a\}= \sharp \{ y \in [\mu]\,|\, \res(y)=a\} 
\text{ for all } 
a \in \Res(\vL^+).
\]  

\begin{thm}[{\cite[Theorem 2.11]{LM}}] 
\label{thm LM}
For $\la,\mu \in \vL^+$, we have that 
\[ S^\la \sim S^\mu \,\, \text{ if and only if } \la \sim_R \mu.\]
\end{thm}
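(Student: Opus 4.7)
I would prove the two implications by quite different techniques. For the forward direction $S^\la \sim S^\mu \Rightarrow \la \sim_R \mu$, the plan is to use central characters produced by the Jucys--Murphy elements $L_1,\dots,L_n$ of $\He$ (defined inductively by $L_1=T_0$, $L_{i+1}=q^{-1}T_iL_iT_i$). A standard computation shows that every symmetric polynomial $p(L_1,\dots,L_n)$ lies in $Z(\He)$. Over the splitting field $K$ the algebra $\He_K$ is semisimple, the $L_i$ act diagonally on $S^\la_K$, and the multiset of joint eigenvalue tuples coincides, up to reordering of coordinates, with the list of residues $\res(x)$ for $x\in[\la]$ read along the standard tableaux of shape $\la$. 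Hence $p(L_1,\dots,L_n)$ acts on $S^\la_K$ by a scalar depending only on the residue multiset of $\la$, and by base change through the modular system of \S\ref{RCA} the same scalar acts on every composition factor of $S^\la$. Since symmetric polynomials separate multisets and since composition factors in a common block share every central character, this forces the residue multisets of $\la$ and $\mu$ to agree, i.e.\ $\la\sim_R\mu$.

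For the reverse direction I would invoke Ariki's categorification of the higher-level Fock space. The $i$-restriction and $i$-induction functors, obtained by projecting onto generalised $L_n$-eigenspaces, endow $\bigoplus_n K_0(\He\cmod)$ (taken over all $n$) with an action of an appropriate affine Kac--Moody algebra, under which the assignment $[S^\la]\mapsto\la$ realises the higher-level Fock space $\mathcal{F}$. In this language, the weight of $\la$ is precisely its residue content, so the hypothesis $\la\sim_R\mu$ translates into the assertion that $[S^\la]$ and $[S^\mu]$ lie in a common weight space of $\mathcal{F}$. It then remains to show that any two multipartitions of a common weight can be joined by a chain $\la=\la^{(1)},\dots,\la^{(k)}=\mu$ in which consecutive Specht modules $S^{\la^{(i)}}$ and $S^{\la^{(i+1)}}$ share a composition factor.

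\textbf{Main obstacle.} This last step is the technical heart of the argument: weight-space membership is much weaker than linkage, and no direct passage is available. Following the Lyle--Mathas strategy, the plan is (i) to exhibit an explicit family of \lq\lq elementary moves'' on $\CP_{n,r}$ --- such as rim $e$-hook translations when $q$ is a primitive $e$-th root of unity, together with exchanges of nodes between components whose charges coincide --- and to verify combinatorially that these generate $\sim_R$; and (ii) for each move $\la\leftrightarrow\la'$, to produce an auxiliary $\nu$ such that the Jantzen sum formula for the cellular bilinear form on $S^\nu$ \emph{detects} a common composition factor of $S^\la$ and $S^{\la'}$. Step (ii) is where the real work lies: the Jantzen sum delivers nonzero information only when the residues near the moved nodes satisfy delicate numerical coincidences, so the elementary moves must be crafted precisely to meet those requirements, and the argument splits into cases according to how the nodes involved sit inside $\la$. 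I expect this combinatorial--homological matching to be the principal obstacle.
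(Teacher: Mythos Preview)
The paper does not prove this theorem at all: it is quoted as \cite[Theorem~2.11]{LM} and used as a black box. There is therefore no ``paper's own proof'' to compare against. What you have sketched is, in essence, an outline of the Lyle--Mathas argument itself.

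Your forward direction is exactly the standard route and is correct as stated: symmetric polynomials in the Jucys--Murphy elements $L_1,\dots,L_n$ are central, act on $S^\la_K$ by scalars determined by the residue multiset, and specialise through the modular system to give the same central character on every composition factor of $S^\la$. Since power sums separate multisets over $\CC$, this forces $\la \sim_R \mu$.

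For the reverse direction, your detour through Ariki's categorification and Fock-space weight spaces buys nothing: the weight of $\la$ in the Fock space \emph{is} its residue multiset, so ``same weight space'' is just a restatement of $\la \sim_R \mu$, not a step towards linkage. You then correctly fall back on the Lyle--Mathas strategy of introducing the Jantzen equivalence $\sim_J$ and showing $\sim_R \,=\, \sim_J \,=\, \sim$. This is indeed what they do, and the present paper even invokes the intermediate identity $\sim_R \,=\, \sim_J$ explicitly in the proof of Proposition~\ref{prop sim sim sim}. The combinatorial core --- that residue-equivalent multipartitions are connected by a chain of rim-hook unwrap/rewrap moves, and that each such move is detected by a nonzero contribution in the Jantzen sum formula for some intermediate Specht module --- is precisely the content of \cite{LM}, and your identification of it as the main obstacle is accurate.
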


\para 
We take 
$\Irr W=\{E^\la \,|\, \la \in \vL^+\}$ 
such that 
$K \otimes_{\CC} E^\la \cong S^\la_K$ via the isomorphism $\He_K \cong K \otimes_{\CC} \CC W$. 
Since 
$S_K^\la = K \otimes_R S_R^\la$ and $S^\la = \CC \otimes_R S_R^\la$, 
we have that 
\begin{align}
\label{decomposition map DJM Specht} 
d_{K,\CC}([S_K^\la]) = [S^\la]. 
\end{align}
It is also well-known that 
$K \otimes_{\CC} E^\la \cong S_K(\la)$ via the isomorphism $\He_K \cong K \otimes_{\CC} \CC W$ 
(see before \cite[Theorem 5.13]{GGOR}).  
Thus, we have $S_K^\la \cong S_K(\la)$ as $\He_K$-modules. 
Then 
Lemma \ref{lem decomposition map Specht} together with \eqref{decomposition map DJM Specht} 
implies that 
\begin{align}
\label{iso in Grothen S la}
[S(\la)] = [S^\la] \quad \text{ in } K_0(\He \cmod) \text{ for } \la \in \vL^+.
\end{align}
Note that 
$S(\la) \not\cong S^\la$ as $\He$-modules in general. 
Hence, 
$\Top S(\la) \not\cong \Top S^\la$ in general. 
Moreover, 
$\vL^+_0 \not= \CK_{n,r}$ in general. 
Let 
\[\theta  : \vL^+_0 \ra \CK_{n,r}\] 
be the bijection 
such that 
$D(\la) \cong D^{\theta(\la)}$ as $\He$-modules. 
Then we have the following proposition.

\begin{prop}
For $\la \in \vL^+$ and $\mu \in \vL^+_0$, 
we have 
\[[\D(\la) : L(\mu)]_{\CO} = [S(\la) : D(\mu)]_{\He} = [S^\la : D^{\theta(\mu)}]_{\He}.\] 

\end{prop}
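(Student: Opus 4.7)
The plan is to prove the two equalities separately, since they arise from quite different inputs: the first is a consequence of the exactness and partial fullness properties of $\KZ$ applied to composition series, while the second is a Grothendieck-group identity combined with the bookkeeping bijection $\theta$.

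First, to establish $[\D(\la):L(\mu)]_\CO = [S(\la):D(\mu)]_\He$ for $\mu \in \vL^+_0$, I would choose a composition series
\[
0 = M_0 \subset M_1 \subset \cdots \subset M_k = \D(\la)
\]
of $\D(\la)$ in $\CO$, with $M_i/M_{i-1} \cong L(\nu_i)$ for some $\nu_i \in \vL^+$. Since $\KZ$ is exact, applying it yields a filtration of $S(\la) = \KZ(\D(\la))$ whose successive quotients are $\KZ(L(\nu_i)) = D(\nu_i)$. For indices $i$ with $\nu_i \in \vL^+\setminus\vL^+_0$ the factor $D(\nu_i)$ vanishes; discarding these, the remaining factors $D(\nu_i)$ with $\nu_i \in \vL^+_0$ form a composition series of $S(\la)$ by Lemma \ref{lem projective simple B}(iii), which guarantees that $\{D(\nu) \mid \nu \in \vL^+_0\}$ is a complete set of pairwise non-isomorphic simple $\He$-modules. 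Hence for $\mu \in \vL^+_0$ the multiplicity of $D(\mu)$ in $S(\la)$ equals the number of $i$ with $\nu_i = \mu$, which is precisely $[\D(\la):L(\mu)]_\CO$.

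For the second equality $[S(\la):D(\mu)]_\He = [S^\la : D^{\theta(\mu)}]_\He$, I would invoke the identity \eqref{iso in Grothen S la}, which states that $[S(\la)] = [S^\la]$ in $K_0(\He\cmod)$. Because composition multiplicities of any given simple are additive on Grothendieck classes, the multiplicities of any simple $\He$-module in $S(\la)$ and in $S^\la$ coincide. Applying this to the simple $D(\mu) \cong D^{\theta(\mu)}$, which is the defining property of the bijection $\theta : \vL^+_0 \to \CK_{n,r}$, gives exactly the required identity.

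There is no serious obstacle here: the substantive content has already been set up in the preceding sections (exactness of $\KZ$, the classification of simple $\He$-modules via $D(\mu)$ and via $D^\la$, and the Grothendieck-group identification of $S(\la)$ with $S^\la$). The only delicate point is to check that after discarding the zero quotients $D(\nu_i)$ in the filtration of $S(\la)$ one genuinely obtains a composition series, but this follows immediately from the fact that the nonzero $D(\nu)$ are simple and pairwise non-isomorphic.
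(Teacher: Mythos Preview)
Your proposal is correct and follows essentially the same approach as the paper: the first equality is obtained from the exactness of $\KZ$ (you spell out the composition-series argument that the paper leaves implicit), and the second equality comes from the Grothendieck-group identity $[S(\la)]=[S^\la]$ together with the defining isomorphism $D(\mu)\cong D^{\theta(\mu)}$.
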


\begin{proof}
The first equality is clear since the KZ functor is exact. 
By \eqref{iso in Grothen S la}, 
we have $[S(\la)] = [S^\la]$ in $K_0(\He \cmod)$, 
and $D(\mu) \cong D^{\theta(\mu)}$. 
Thus, we have the second equality. 
\end{proof}

The following theorem gives a relation between blocks of $\CO$ and blocks of $\He$.  
In particular, 
we obtain the classification of blocks of $\CO$ 
by using the residue equivalence.  

\begin{thm}
\label{thm G(r,1,n)}
For $\la, \mu \in \vL^+$, we have 
\[ \D(\la) \sim \D(\mu) 
\Leftrightarrow  
S(\la) \sim S(\mu) 
\Leftrightarrow 
S^\la \sim S^\mu 
\Leftrightarrow 
\la \sim_R \mu.
\] 
\end{thm}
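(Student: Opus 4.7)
The plan is to chain together three equivalences, two of which are already established in the excerpt and one of which is an immediate consequence of the Grothendieck-group identity \eqref{iso in Grothen S la}.

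First I would handle $\D(\la) \sim \D(\mu) \Leftrightarrow S(\la) \sim S(\mu)$. This is a direct application of Proposition \ref{prop equivalent D la sim D mu S la sim S mu}, which is available because in \S\ref{RCA} the triple $(\A,\B,F) = (\End_{\CO}(X)^{\opp},\He,\KZ)$ was checked to satisfy the assumptions (A1), (A2), (A3): (A1) and (A2) come from \cite[Theorem 5.16]{GGOR} and the BGG reciprocity for $\CO$, and (A3) follows since $\KZ(\D(\la))\neq 0$ for every $\la$. So this equivalence is automatic.

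Next I would handle $S^\la \sim S^\mu \Leftrightarrow \la \sim_R \mu$; this is Theorem \ref{thm LM} of Lyle--Mathas, applied verbatim.

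The remaining and new equivalence is $S(\la) \sim S(\mu) \Leftrightarrow S^\la \sim S^\mu$. Here the key input is \eqref{iso in Grothen S la}, which says $[S(\la)] = [S^\la]$ in $K_0(\He\cmod)$ for every $\la \in \vL^+$. This means that $S(\la)$ and $S^\la$ have exactly the same multiset of composition factors; in particular, for any pair $\la,\nu \in \vL^+$, the sets of simple constituents of $S(\la)$ and of $S^\la$ coincide, and similarly for $\nu$. Consequently, $S(\la)$ and $S(\nu)$ share a composition factor if and only if $S^\la$ and $S^\nu$ do. The definition of $\sim$ is built from iterating this "share a composition factor" relation, so the two equivalence relations $\sim$ on $\{S(\la)\mid \la\in\vL^+\}$ and on $\{S^\la\mid \la\in \vL^+\}$ coincide under the natural indexing by $\vL^+$. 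This gives the middle equivalence, and chaining all three finishes the proof.

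The only step that requires actual work is the middle one, and the main obstacle there -- namely that $S(\la)$ and $S^\la$ are genuinely non-isomorphic $\He$-modules in general, with even different tops -- has already been bypassed by passing to the Grothendieck group via the modular system $(K,R,\CC)$ and comparing both $S_K(\la)$ and $S^\la_K$ with $K\otimes_{\CC} E^\la$. Once the identity \eqref{iso in Grothen S la} is in hand, the argument is essentially formal, and no further combinatorial input beyond Theorem \ref{thm LM} is required.
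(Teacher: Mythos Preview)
Your proposal is correct and follows exactly the paper's own proof: the first equivalence is Proposition~\ref{prop equivalent D la sim D mu S la sim S mu}, the third is Theorem~\ref{thm LM}, and the middle one follows immediately from \eqref{iso in Grothen S la} since equality in $K_0(\He\cmod)$ means identical composition factors. Your write-up simply spells out in more detail why the Grothendieck-group identity forces the two linkage relations to coincide.
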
 

\begin{proof}
The first equivalence is Proposition \ref{prop equivalent D la sim D mu S la sim S mu}. 
The second equivalence follows from \eqref{iso in Grothen S la}. 
The third equivalence is Theorem \ref{thm LM}.
\end{proof}

\remark 
By \cite{R}, 
under a certain condition for parameters, 
it is known that 
$\CO$ is equivalent to $\Sc_{n,r} \cmod$ 
as highest weight categories,  
where $\Sc_{n,r}$ is the cyclotomic $q$-Schur algebra associated to $\He$ defined in \cite{DJM}.  
In this case, 
we have $S(\la) \cong S^\la$, 
and  $\theta$ is the identity map (in particular, $\vL^+_0 = \CK_{n,r}$).  
So, the above theorem is  known  by \cite{LM}. 
However, 
the above theorem claim that 
a classification of blocks of $\CO$ is given by the equivalent relation \lq\lq $\sim_R$" on $\vL^+$ 
(residue equivalence) 
even if the case where 
$\CO$ is not equivalent to $\Sc_{n,r} \cmod$.



\section{Case of type $G(r,p,n)$} 
In this section, 
we consider the case where  
$W$ 
is the complex reflection group of type 
$G(r,p,n)$, 
where 
$r=pd$ for some $d\geq 1$. 
It is well-known that 
the complex reflection group of type 
$G(r,p,n)$ 
is a normal subgroup of the complex reflection group of type 
$G(r,1,n)$
with the index 
$p$, 
and we will study some relations between type 
$G(r,1,n)$ 
and type 
$G(r,p,n)$. 
Hence, 
we denote by 
$W^{\dag}$ 
the complex reflection group of type 
$G(r,1,n)$,   
and 
we use the results in the previous section for 
$W^\dag$.  
In this section, 
we use the notations in 
\S \ref{RCA} 
for corresponding objects of type 
$G(r,p,n)$, 
and 
we denote by adding the superscript $\dag$ 
for corresponding objects of type 
$G(r,1,n)$, 
e.g. $\CH^\dag$, $\He^\dag$, $\D^\dag (\la)$, $KZ^\dag$, $S^\dag(\la)$, $\cdots$. 
Let 
$\Irr W^\dag =\{ E^{\dag \la} \,|\, \la \in \CP_{n,r}\}$ 
be a complete set of non-isomorphic simple $\CC W^\dag$-modules 
considered in the previous section.

\para 
Let 
$V$ be an $n$ dimensional vector space over $\CC$ 
with a basis $\{e_1,\cdots, e_n\}$. 
Then 
we have 
$W \subset \GL(V)$. 
Recall that 
$s_1, t_1 \in \GL(V)$ 
is a reflection 
defined in 
\eqref{reflections}. 
Then 
$s_1$ (resp. $t_1^p$ in the case where $p\not=r$)
is a reflection contained in $W$, 
and 
let 
$H_{s_1}$ (resp. $H_{t^p_1}$)  
be the reflecting hyperplane 
corresponding to 
$s_1$ (resp. $t^p_1$). 
In the case where $p \not=r$, 
$\{H_{s_1}, H_{t_1^p}\}$ 
gives a complete set of representatives of $W$-orbits of $\CA$, 
and 
we have 
$e_{H_{s_1}}=2$ and $e_{H_{t_1}^p}=d$. 
Hence, 
we can take parameters 
$\{h, k_1,\cdots,k_{d-1}\}$ 
(resp. $\{ \bh, \bk_1,\cdots, \bk_{d-1}\}$) 
of 
$\CH$ 
(resp. $\CH_X$ ($X=R \text{ or } K$)) 
such that 
$h=k_{H_{s_1},1}$ 
(resp. $\bh= \bk_{H_{s_1},1}$) 
and 
$k_j= k_{H_{t^p_1},j}$ 
(resp. $\bk_j=\bk_{H_{t^p_1},j}$) 
for $1 \leq j \leq d-1$.  
On the other hand, 
in the case where $r=p$, 
$\CA$ is the $W$-orbit of $\CA$ itself. 
Hence 
$\CH$ 
(resp. $\CH_X$ ($X=R \text{ or } K$)) 
has a parameter 
$\{h\}$ (resp. $\{\bh \}$).

Then   
$\He$ (resp. $\He_R$, $\He_K$) 
is the associative algebra over $\CC$ (resp. $R$, $K$) 
defined by 
generators 
$a_0,a_1', a_1,a_2,\cdots,a_{n-1}$ 
with defining relations: 
\begin{align*}
&(a_0 - 1)(a_0-x_1)\cdots (a_0 - x_{d-1})=0, 
\\
&(a_1' -1)(a_1'+q)=0,  
\quad 
(a_i -1)(a_i +q)=0 \quad (1 \leq i \leq n-1), 
\\
&a_0 a_1' a_1 = a_1' a_1 a_0, 
\quad 
a_1' a_2 a_1' = a_2 a_1' a_2, 
\quad 
(a_2 a_1' a_1)^2 = (a_1' a_1 a_2)^2,
\\
&a_0 a_i= a_i a_0 \quad (2 \leq i \leq n-1), 
\quad 
a_1' a_j =a_j a_1' \quad (3 \leq j \leq n-1), 
\\
&\underbrace{a_1 a_0 a_1' a_1 a_1' a_1 a_1' \cdots}_{p+1 \text{ factors }} 
	= \underbrace{a_0 a_1' a_0 a_1' a_0 a_1' a_0 \cdots}_{p+1 \text{factors }},
\\
&a_i a_{i+1} a_i = a_{i+1} a_i a_{i+1} \quad (1 \leq i \leq n-2), 
\\
&a_i a_j = a_j a_i \quad (1 \leq i < j-1 \leq n-2), 
\end{align*} 
where 
$x_i=\exp (2 \pi \sqrt{-1}(k_i + \frac{i}{d})$, 
$q=\exp (2 \pi \sqrt{-1} h)$ 
(resp. 
$x_i=\exp (2 \pi \sqrt{-1}(\bk_i + \frac{i}{d})$, 
$q=\exp (2 \pi \sqrt{-1} \bh)$) 
(see \cite{BMR} or \cite{A95} for braid relations).

\para 
Put 
\begin{align*}
&k_{c \cdot p +j}^\dag = k_c + \frac{c}{d} +\frac{j}{p} - \frac{c \cdot p +j}{r} 
\quad 
(0 \leq c \leq d-1,\,  0 \leq j \leq p-1), 
\\
&\bk_{c \cdot p +j}^\dag = \bk_c + \frac{c}{d} +\frac{j}{p} - \frac{c \cdot p +j}{r} 
\quad 
(0 \leq c \leq d-1,\,  0 \leq j \leq p-1), 
\end{align*}
where we set $k_0=\bk_0=0$.

Throughout this section, 
let 
$\CH^\dag$ (resp. $\CH^\dag_X$ ($X=R \text{ or } K$))  
be the ratinal Cherednik algebra 
associated to $W^\dag$ 
with parameters 
$\{h, k_1^\dag,\cdots,k_{r-1}^\dag\}$ 
(resp. $\{ \bh, \bk_1^\dag,\cdots, \bk_{r-1}^\dag\}$) 
such that 
$h=k^\dag_{H_{s_1},1}$ 
(resp. $\bh= \bk^\dag_{H_{s_1},1}$) 
and 
$k_j^\dag= k^\dag_{H_{t_1},j}$ 
(resp. $\bk_j^\dag=\bk^\dag_{H_{t_1},j}$) 
for $1 \leq j \leq r-1$.  
Since 
\begin{align*}
\exp \left( 2 \pi \sqrt{-1} \big(k_{c \cdot p +j}^\dag + \frac{c\cdot p +j}{r}\big) \right)
&= 
\exp \left( 2 \pi \sqrt{-1} \big(  k_c + \frac{c}{d} +\frac{j}{p} \big) \right)
\\[2mm]
&= x_c \, \xi^{j} \quad (\xi=\exp ( 2 \pi \sqrt{-1} / p)),
\end{align*}
where we put $x_0=1$ 
(similar for $\bk^\dag_{c \cdot p+j}$), 
the defining relation 
\eqref{Hecke relation} 
of $\He^\dag$ (resp. $\He^\dag_X$) 
replaced by 
\begin{align*}
(T_0^p-1)(T_0^p -x_1^p) \cdots (T_0^p - x_{d-1}^p)=0.
\end{align*}
Since 
$\He_K^\dag$ is semi-simple (thus, $\CO_K^\dag$ is also semi-simple) 
by \cite{Ari94} , 
we can obtain any results for type $G(r,1,n)$ in the previous sections  
even if the case of these parameters.

By \cite[Proposition 1.6]{A95}, 
there is the injective algebra homomorphism 
$\vf : \He_X \ra \He_X^\dag$ ($X=\CC,\, R \text{ or }K$) 
such that 
$\vf(a_0)=T_0^p$, $\vf(a'_1)=T_0^{-1} T_1 T_0$, $\vf(a_i)=T_i$ ($1 \leq i \leq n-1$). 
Under this injective homomorphism $\vf$, 
we regard $\He_X$ as a subalgebra of $\He_X^\dag$.

\para 
\label{par res D to B}
For $M^\dag \in \CC W^\dag \cmod$, 
we denote by 
$M^\dag \dwar$ 
the restriction of the action to $\CC W$.   
For $\la=(\la^{(1)},\cdots, \la^{(r)}) \in \CP_{n,r}$ 
and 
$i \in \ZZ$, 
we define 
$\la[i]=(\la[i]^{(1)}, \cdots, \la[i]^{(r)})
\in \CP_{n,r}$ 
by 
\[
\la[i]^{(c\cdot p + j)}= 
\la^{(c \cdot p +k)}, 
\quad 
(0 \leq c \leq d-1, \,\,  1 \leq j \leq p), 
\]
where 
$c \cdot p < c \cdot p + k \leq (c+1) \cdot p$  
such that 
$k \equiv j+i \mod p$.
For an example, 
if $r=6$ and $p=3$, we have 
\begin{align*}
& \la[1]=(\la^{(2)}, \la^{(3)}, \la^{(1)}, \vdots \, \la^{(5)}, \la^{(6)}, \la^{(4)}), 
\\
& \la[2]=(\la^{(3)}, \la^{(1)}, \la^{(2)}, \vdots \, \la^{(6)}, \la^{(4)}, \la^{(5)}), 
\\
& \la[3]=(\la^{(1)}, \la^{(2)}, \la^{(3)}, \vdots \, \la^{(4)}, \la^{(5)}, \la^{(6)})=\la. 
\end{align*} 
Let 
$\Fk_\la$ be the minimum positive integer such that 
$\la[\Fk_\la]=\la$. 
It is clear that 
$\Fk_\la \mid p$.  
Put 
$\Fd_\la = p/\Fk_\la$. 
Then 
we have that 
$\la[ i + \Fk_\la] =\la[i]$.
Let 
$\sim_{\ast}$ 
be the equivalent relation on $\CP_{n,r} \times \ZZ / p \ZZ$ 
defined by 
\[ (\la, \ol{j}) \sim_\ast (\la[i], \, \ol{c \cdot \Fd_\la + j}) 
\quad (i,\, c \in \ZZ),
\] 
where we denote by 
$\ol{m}$ 
the image of 
$m \in \ZZ$ 
in 
$\ZZ / p \ZZ$. 
Let 
$\vL^+$ be the set of equivalent classes 
of $\CP_{n,r} \times \ZZ / p\ZZ$ 
with respect to the relation $\sim_\ast$, 
and 
we denote by 
$\la \lan j \ran \in \vL^+$ 
the equivalent class 
containing 
$(\la,\ol{j}) \in \CP_{n,r} \times \ZZ / p \ZZ$. 
Thus we have that 
$\la \lan j \ran = \la[i] \lan j \ran 
= \la \lan c \cdot \Fd_\la +j \ran = \la[i] \lan c \cdot \Fd_\la +j \ran$ 
for $i, \, c \in \ZZ$.  
Then it is known that, 
\begin{align}
\label{decom E G(r,1,n) to G(r,p,n)}
E^{\dag \la} \dwar \, 
	\cong E^{\dag \la[i]} \dwar \, 
	\cong E^{\la\lan 1 \ran} \oplus \cdots \oplus E^{\la\lan \Fd_\la \ran} \,\,\, (i \in \ZZ)  
\quad \text{ as $\CC W$-modules}, 
\end{align}
for some simple $\CC W$-modules $E^{\la \lan j \ran}$ ($1 \leq j \leq \Fd_\la$), 
and  
$
\{ E^{\la\lan j \ran} \bigm| \la \lan j \ran \in \vL^+ \}
$
gives a complete set of pairwise non-isomorphic simple $\CC W$-modules. 
Hence, 
we have that 
\[
\Irr \CC W = \{ E^{\la \lan j \ran} \,|\, \la \lan j \ran \in \vL^+ \} .
\] 
Moreover, 
we have that 
\begin{align}
\label{induce G(r,p,n) to G(r,1,n)}
E^{\la \lan j \ran} \upar \, \cong E^{\dag \la[1]} \oplus \cdots \oplus E^{\dag \la[\Fk_\la]}
\quad \text{as $W^\dag$-modules} \,\, (1 \leq j \leq \Fd_\la).
\end{align}

\para 
For $M^\dag \in \He_X^\dag \cmod$, 
we denote by 
$M^\dag \dwar$ 
the restriction of the action to $\He_X$. 
On the other hand, 
for $N \in \He_X \cmod$, 
we denote by 
$N \upar $ 
the induced module $\He^\dag_X \otimes_{\He_X} N$. 

Then, 
by \eqref{decom E G(r,1,n) to G(r,p,n)}, 
we have that
\begin{align}
\label{S K decom G(r,1,n) to G(r,p,n)}
&S_K^\dag (\la) \dwar \,
\cong 
S_K(\la \lan 1 \ran) \oplus \cdots \oplus S_K(\la \lan \Fd_\la \ran ) 
\quad \text{for } \la \in \CP_{n,r} 
\end{align}
and, by \eqref{induce G(r,p,n) to G(r,1,n)},  we have that, 

\begin{align}
\label{S K induce G(r,p,n) to G(r,1,n)}
S_K(\la \lan j \ran ) \upar \, 
\cong 
S_K^\dag (\la[1]) \oplus \cdots \oplus S_K^\dag (\la[\Fk_\la]) 
\quad \text{for } \la\lan j \ran \in \vL^+. 
\end{align}

We define the group homomorphism 
$\Res_X : K_0(\He_X^\dag \cmod) \ra K_0(\He_X \cmod)$ 
by $[M^\dag] \mapsto [M^\dag \dwar]$. 
We also define the group homomorphism 
$\Ind_X : K_0(\He_X \cmod) \ra K_0(\He_X^\dag \cmod)$ 
by $[N] \mapsto [N\upar]$. 
Since $\He_X^\dag$ is a free right $\He_X$-module, 
induced functor from $\He_X \cmod $ to $\He_X^\dag \cmod$ 
is exact. 
Thus $\Ind_X$ is well-defined. 
Then we have the following lemma.

\begin{lem}\
\label{lem G(r,1,n) G(r,p,n) Specht}
\begin{enumerate}
\item 
For $\la \in \CP_{n,r}$, 
we have 
\[ [
S^\dag(\la) \dwar] 
= 	
[S(\la \lan 1 \ran) ] + \cdots + [S(\la \lan \Fd_\la \ran)]
\quad \text{in $K_0(\He \cmod)$.}
\]

\item 
For $\la \lan j \ran \in \vL^+$, 
we have 
\[ 
[S(\la\lan j \ran ) \upar] 
= 
[S^\dag (\la[1])] + \cdots + [ S^\dag (\la[\Fk_\la])] 
\quad \text{in $K_0(\He^\dag \cmod)$.}
\]	

\end{enumerate}

\end{lem}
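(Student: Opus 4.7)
The plan is to lift both identities to the Grothendieck group over $K$ (where $\He^\dag_K$, and hence $\He_K$ by a standard semisimplicity argument, is semisimple) and then descend to $\CC$ via the decomposition maps. As a starting point, the $\He_K$-module isomorphism \eqref{S K decom G(r,1,n) to G(r,p,n)} and the $\He^\dag_K$-module isomorphism \eqref{S K induce G(r,p,n) to G(r,1,n)} yield the identities
\begin{align*}
[S_K^\dag(\la) \dwar] &= [S_K(\la \lan 1 \ran)] + \cdots + [S_K(\la \lan \Fd_\la \ran)], \\
[S_K(\la \lan j \ran) \upar] &= [S_K^\dag(\la[1])] + \cdots + [S_K^\dag(\la[\Fk_\la])]
\end{align*}
in $K_0(\He_K \cmod)$ and $K_0(\He_K^\dag \cmod)$ respectively.

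The second step is to verify that restriction and induction commute with the decomposition maps, namely $\Res_\CC \circ d^\dag_{K,\CC} = d_{K,\CC} \circ \Res_K$ and $\Ind_\CC \circ d_{K,\CC} = d^\dag_{K,\CC} \circ \Ind_K$. For restriction, if $N^\dag$ is an $\He^\dag_R$-lattice of a simple $\He^\dag_K$-module $M^\dag$, then via the inclusion $\He_R \hookrightarrow \He^\dag_R$ (given by $\vf$) it is also an $\He_R$-lattice of $M^\dag \dwar$, and $\CC \otimes_R (N^\dag \dwar) = (\CC \otimes_R N^\dag) \dwar$. For induction, I would invoke the fact from \cite{A95} that $\He^\dag_R$ is free of rank $p$ as a right $\He_R$-module; then for any $\He_R$-lattice $N$ of a simple $\He_K$-module, $\He^\dag_R \otimes_{\He_R} N$ is an $\He^\dag_R$-lattice of the induced module, and base-change to $\CC$ commutes with the tensor product.

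In the final step I would apply $d_{K,\CC}$ to the first identity and $d^\dag_{K,\CC}$ to the second. The right-hand sides transform into the desired classes $\sum_j [S(\la \lan j \ran)]$ and $\sum_i [S^\dag(\la[i])]$ by Lemma \ref{lem decomposition map Specht}, applied in both the $G(r,p,n)$ and $G(r,1,n)$ settings. By the compatibility established in step two, the left-hand sides become $[S^\dag(\la) \dwar]$ and $[S(\la \lan j \ran) \upar]$ respectively, giving (\roi) and (\roii).

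The main technical obstacle is the commutativity of restriction and induction with the decomposition maps. It relies on the exactness of both functors (the induction case using freeness of $\He^\dag_R$ over $\He_R$) and on the fact that lattices are preserved under these functors with the expected base-change isomorphism. Once this is in place, the lemma follows formally from the semisimple decompositions over $K$.
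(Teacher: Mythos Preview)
Your proposal is correct and follows the same approach as the paper: lift to $K$ via \eqref{S K decom G(r,1,n) to G(r,p,n)} and \eqref{S K induce G(r,p,n) to G(r,1,n)}, then descend via $d_{K,\CC}$ using Lemma~\ref{lem decomposition map Specht}. The paper is simply more terse, writing $d_{K,\CC}([S_K^\dag(\la)\dwar]) = [S^\dag(\la)\dwar]$ ``by the definition of decomposition maps'' and handling (\roii) ``in a similar way,'' whereas you spell out the commutativity of $\Res$ and $\Ind$ with the decomposition maps explicitly.
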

\begin{proof}
(\roi) 
By Lemma \ref{lem decomposition map Specht} and \eqref{S K decom G(r,1,n) to G(r,p,n)}, 
we have 
\begin{align*}
 d_{K,\CC}( [ S_K^\dag(\la)\dwar]) 
&= 
 d_{K, \CC}\big( [S_K(\la \lan 1 \ran)] + \cdots + [ S_K (\la \lan \Fd_\la \ran) ] \big) 
\\
&=
 [S(\la \lan 1 \ran) ] + \cdots + [ S (\la \lan \Fd_\la \ran) ]. 
\end{align*}
On the other hand, 
by the definition of decomposition maps, 
we have 
\[ d_{K,\CC} ( [ S_K^\dag (\la)\dwar ]) = [S^\dag(\la)\dwar]. \]
Then (\roi) was proven. 
By using \eqref{S K induce G(r,p,n) to G(r,1,n)} together with Lemma \ref{lem decomposition map Specht}, 
we have (\roii) in a similar way as in (\roi). 
\end{proof}

\para 
\label{simple HD}
We recall some relations 
between  simple $\He$-modules 
and simple $\He^\dag$-modules 
which have been studied in 
\cite{GJ} and \cite{Hu09}

Let 
$\{S^{\dag \la} \,|\, \la \in \CP_{n,r}\}$ 
be the set of Specht modules of $\He^\dag$ constructed in \cite{DJM} 
as seen in the previous section. 
Then  
$\{D^{\dag \la} \,|\, \la \in \CK_{n,r}\}$ 
is a complete set of simple $\He^\dag$-modules. 

Let 
$\s$ 
be the algebra automorphism of 
$\He^\dag$ 
defined by 
$\s(T_0)= \xi T_0$ 
($\xi=\exp(2 \pi \sqrt{-1}/ p)$), 
$\s(T_i)= T_i$ 
for $i=1,\cdots,n-1$. 
Then we see that 
the restriction $\s|_{\He}$ of $\s$ to $\He$ 
is the identity map on $\He$.  
We also define 
the algebra automorphism $\t$ of $\He^\dag$ 
by $\t(x) = T_0^{-1} x T_0$ for $x \in \He^\dag$. 
Then we have that $\t (\He)= \He$.

For 
$M^\dag \in \He^\dag \cmod$,  
let 
$(M^\dag)^\s$ 
be the twisted 
$\He^\dag$-module of $M$ 
via 
$\s$. 
Since 
$\s|_{\He}$ is identity map, 
we have that 
$(M^\dag)^\s \dwar \cong M^\dag \dwar$ 
as $\He$-modules.  
Similarly, 
for 
$N \in \He \cmod$,  
let 
$N^\t$ 
be the twisted 
$\He$-module of $N$ 
via   
$\t$. 
For $\la \in \CK_{n,r}$ and $i \in \ZZ$, 
we define 
$\la[i]^\flat $ 
by 
$(D^{\dag \la})^{\s^i} \cong D^{\dag \la[i]^\flat}$. 
Let 
$\Fk_\la^\flat$ 
be the minimum positive integer 
such that 
$\la[\Fk_\la^\flat]^\flat = \la$ 
(thus $(D^{\dag \la})^{\s^{\Fk_\la^\flat}} \cong D^{\dag \la}$), 
and put 
$\Fd_\la^\flat = p/ \Fk_\la^\flat$.  
Let 
$D$ 
be a simple $\He$-submodule of $D^{\dag \la} \dwar$. 
Then by \cite[Lemma 2.2]{GJ}, 
$\Fd_{\la}^\flat$ is the minimum positive integer such that 
$D^{\t^{\Fd_\la^\flat}} \cong D$. 
Moreover we have that, 
for $\la \in \CK_{n,r}$ and $i=1,\cdots, \Fk_\la^\flat$,  
\begin{align}
\label{decom D G(r,1,n) to G(r,p,n)} 
D^{\dag \la} \dwar \, 
\cong 
D^{\dag \la[i]^\flat} \dwar 
\cong D \oplus D^{\t} 
	\oplus \cdots \oplus D^{\t^{\Fd_{\la}^\flat -1}} 
\quad 
\text{ as $\He$-modules.}
\end{align} 

Let 
$\sim_\star $ 
be the equivalent relation on $\CK_{n,r} \times \ZZ/ p \ZZ$ 
defined by 
\[ 
(\la, \ol{j}) \sim_\star (\la[i]^\flat, \ol{ c \cdot \Fd_\la^\flat +j}) 
\quad 
(i, \, c \in \ZZ).
\]
We denote by $(\CK_{n,r} \times \ZZ / p \ZZ)/_{\sim_\star}$ 
the set of equivalent classes of $\CK_{n,r} \times \ZZ / p \ZZ$   
with respect to the relation $\sim_\star$, 
and 
we denote by 
$\la \lan j \ran^\flat \in (\CK_{n,r} \times \ZZ / p \ZZ)/_{\sim_\star}$ 
the equivalent class containing 
$(\la, \ol{j}) \in \CK_{n,r} \times \ZZ / p\ZZ$.  
Then, by \cite[Lemma 2.2]{GJ} (see also \cite[Propsotion 2.4]{Hu09}), 
\[ \big\{ D^{\la\lan j \ran^\flat} \bigm| \la \lan j \ran^\flat \in  
 (\CK_{n,r} \times \ZZ / p \ZZ)/_{\sim_\star} \big\}
\]
gives a complete set of pairwise non-isomorphic simple $\He$-modules, 
where 
we put $D^{\la \lan j \ran^\flat} = D^{\t^{j}}$ 
for some simple 
$\He$-submodule $D$ of $D^{\dag \la} \dwar$ 
(see \eqref{decom D G(r,1,n) to G(r,p,n)}).

By \cite[Lemma 2.2]{GJ}, 
we also have that, 
for $\la \lan j \ran^\flat \in (\CK_{n,r} \times \ZZ / p \ZZ)/_{\sim_\star}$,  
\begin{align}
\label{induce D G(r,p,n) to G(r,1,n)} 
D^{\la \lan j \ran^\flat} \upar \, 
\cong 
D^{\dag \la[1]^\flat} \oplus \cdots \oplus D^{\dag \la[\Fk_\la^\flat]^\flat} 
\quad 
\text{ as $\He^\dag$-modules.}   
\end{align}

\remarks 
\label{remark involotion tau}
(\roi) 
For $\la \in \CK_{n,r}$, 
$\la[i]^\flat$ ($1 \leq i \leq \Fk_\la^\flat$) 
is described in \cite{Hu03} (the case of type D), \cite{Hu07} (the case of type $G(r,r,n)$) 
and \cite{GJ}, \cite{Hu09} (general case).

(\roii) 
Recall that 
$\{D(\la\lan i \ran') \,|\, \la\lan i \ran' \in \vL^+_0 \}$ 
gives a complete set of non-isomorphic simple $\He$-modules 
(Lemma \ref{lem projective simple B}). 
Hence, 
there exists the bijection 
$\eta : \vL^+_0 \ra (\CK_{n,r} \times \ZZ / p \ZZ)/_{\sim_\star}$ 
such that 
$D(\la\lan i\ran') \cong D^{\eta(\la\lan i \ran')}$. 
\\

Now we have the following proposition.

\begin{prop}
\label{prop decom rel G(r,1,n) G(r,p,n)}
For $\la \in \CP_{n,r}$ and $\mu \in \CK_{n,r}$, we have the following. 
\end{prop}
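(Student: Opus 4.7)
The plan is to compute $[S^\dag(\la)\dwar]$ in $K_0(\He\cmod)$ in two different ways and then compare coefficients. This mirrors the standard Clifford-theoretic dictionary translating decomposition numbers from the Ariki--Koike algebra $\He^\dag$ down to its subalgebra $\He$, and it uses precisely the ingredients assembled in the preceding paragraphs.

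First I would expand $[S^\dag(\la)\dwar]$ using Lemma \ref{lem G(r,1,n) G(r,p,n) Specht}(\roi), which gives
\[
[S^\dag(\la)\dwar] \;=\; \sum_{i=1}^{\Fd_\la} [S(\la\lan i\ran)] \quad\text{in } K_0(\He\cmod).
\]
Next, I would expand each $[S(\la\lan i\ran)]$ in the basis of simple $\He$-modules indexed by $(\CK_{n,r}\times\ZZ/p\ZZ)/_{\sim_\star}$, producing coefficients $[S(\la\lan i\ran):D^{\mu\lan j\ran^\flat}]_{\He}$.

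For the second computation, I would take a composition series of $S^\dag(\la)$ as an $\He^\dag$-module and restrict to $\He$ through the embedding $\vf$; the restricted series is then a filtration whose factors are the $\He$-modules $D^{\dag\nu}\dwar$ appearing with multiplicity $[S^\dag(\la):D^{\dag\nu}]_{\He^\dag}$. Applying \eqref{decom D G(r,1,n) to G(r,p,n)} to each such restriction, we can rewrite
\[
[S^\dag(\la)\dwar] \;=\; \sum_{\nu \in \CK_{n,r}} [S^\dag(\la):D^{\dag\nu}]_{\He^\dag} \cdot \sum_{j=1}^{\Fd_\nu^\flat} [D^{\nu\lan j\ran^\flat}],
\]
after grouping the $\s$-orbit of $\nu$ using $(D^{\dag\nu})^{\s^i}\cong D^{\dag\nu[i]^\flat}$ and the definition of the equivalence class $\nu\lan j\ran^\flat$.

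Equating these two expressions in $K_0(\He\cmod)$ and using the linear independence of the classes $[D^{\mu\lan j\ran^\flat}]$ yields the desired identity between decomposition numbers for $\He$ and for $\He^\dag$. The main subtlety I anticipate is bookkeeping for the two distinct equivalence relations $\sim_\ast$ (governing standards) and $\sim_\star$ (governing simples): one must carefully justify that restricting a composition series respects the orbit structure, so that the multiplicity of $D^{\mu\lan j\ran^\flat}$ contributed by each $\s$-orbit is counted exactly once and that the parameter $\Fk_\mu^\flat$ (rather than $\Fk_\mu$ or $\Fd_\mu$) is the correct one. Once the orbit-counting is handled, the identification is purely formal from Lemma \ref{lem G(r,1,n) G(r,p,n) Specht} and the exactness of restriction from $\He^\dag\cmod$ to $\He\cmod$.
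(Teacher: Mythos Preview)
Your argument for part~(\roi) is correct and is essentially the paper's own: compute $[S^{\dag\la}\dwar]$ in $K_0(\He\cmod)$ two ways, once via Lemma~\ref{lem G(r,1,n) G(r,p,n) Specht}(\roi) and once by restricting a composition series of $S^{\dag\la}$ and applying \eqref{decom D G(r,1,n) to G(r,p,n)}, then compare coefficients of $[D^{\mu\lan i\ran^\flat}]$. (A small point: one also needs $[S^\dag(\la)]=[S^{\dag\la}]$ from \eqref{iso in Grothen S la}, and the observation $S^{\dag\la}\dwar\cong S^{\dag\la[j]}\dwar$ to obtain the stated dependence on~$j$; the paper invokes both explicitly.)

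The genuine gap is that you have only proved part~(\roi). The proposition has a second identity~(\roii), and your restriction computation cannot yield it: (\roii) fixes the index~$i$ of the standard module $S(\la\lan i\ran)$ and sums over the simple-module index~$s$, whereas your coefficient comparison necessarily sums over the standard-module index (all $S(\la\lan s\ran)$ appear together in $S^{\dag\la}\dwar$). The paper obtains~(\roii) by the dual computation: take a composition series of $S(\la\lan i\ran)$, apply the \emph{induction} functor $\upar$ (exact since $\He^\dag$ is free over $\He$), use \eqref{induce D G(r,p,n) to G(r,1,n)} to expand each $D^{\mu_l\lan j_l\ran^\flat}\upar$, and compare with the expression for $[S(\la\lan i\ran)\upar]$ given by Lemma~\ref{lem G(r,1,n) G(r,p,n) Specht}(\roii). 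You should add this second half.
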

\begin{enumerate}
\item 
$\dis 
\sum_{s=1}^{\Fd_\la} \, [S(\la \lan s \ran) : D^{\mu \lan i \ran^\flat}]_{\He} 
= 
\sum_{t=1}^{\Fk_\mu^\flat} \, [S^{\dag \la[j]} : D^{\dag \mu[t]^\flat}]_{\He^\dag}
$ 
\quad 
$(1 \leq i \leq \Fd_\mu^\flat$, $1 \leq j \leq \Fk_\la)$.

\item 
$\dis 
\sum_{s=1}^{\Fd_\mu^\flat} \, [S(\la \lan i \ran) : D^{\mu \lan s \ran^\flat}]_{\He} 
= 
\sum_{t=1}^{\Fk_\la} [S^{\dag \la[t]} : D^{\dag \mu[j]^\flat}]_{\He^\dag}
$ 
\quad 
$(1 \leq i \leq \Fd_\la $, $1 \leq j \leq \Fk_\mu^\flat)$.
\end{enumerate}
\begin{proof}
Let 
\[ S^{\dag \la[j]} = M_k \supset M_{k-1} \supset \cdots \supset M_1 \supset M_0 =0 \]
be a composition series of $S^{\dag \la[j]}$ in $\He^\dag \cmod$ 
such that 
$M_l/ M_{l-1} \cong D^{\dag \mu_l}$. 
Applying the restriction functor, 
we have the filtration 
\[S^{\dag \la[j]}\dwar \, 
= M_k \dwar \, \supset  M_{k-1}\dwar \, \supset \cdots \supset M_1\dwar \, \supset M_0 \dwar \, =0 \]
such that 
$M_l \dwar\, / M_{l-1}\dwar \, \cong D^{\dag \mu_l}\dwar$ in $\He \cmod$. 
Thus, 
by \eqref{decom D G(r,1,n) to G(r,p,n)}, 
we have 
\begin{align} 
\label{decom rel 1}  
	[ S^{\dag \la[j]} \dwar \, : D^{\mu \lan i\ran^\flat} ]_{\He} 
= 
	\sum_{t=1}^{\Fk_\mu^\flat}
	[ S^{\dag \la[j]} : D^{\dag \mu[t]^\flat}]_{\He^\dag}.
\end{align}

On the other hand, 	
by \eqref{iso in Grothen S la} and Lemma \ref{lem G(r,1,n) G(r,p,n) Specht} (\roi) 
together with $S^{\dag \la} \dwar \, \cong S^{\dag \la[j]} \dwar$, 
\begin{align}
\label{decom rel 2}
	[ S^{\dag \la[j]} \dwar \, : D^{\mu \lan i\ran^\flat} ]_{\He} 
=   \sum_{s=1}^{\Fd_\la} \, [S(\la \lan s \ran) : D^{\mu \lan i \ran^\flat}]_{\He}. 
\end{align}
\eqref{decom rel 1} and \eqref{decom rel 2} imply (\roi).
Next we prove (\roii). 
Let 
\[ S(\la\lan i \ran ) = N_k \supset N_{k-1} \supset \cdots \supset N_1 \supset N_0 =0 \]
be a composition series of $S(\la \lan i \ran)$ in $\He \cmod$ 
such that 
$N_l/ N_{l-1} \cong D^{\mu_l \lan j_l \ran^\flat}$. 
Applying the induced functor, 
we have the filtration 
\[
	S(\la \lan i \ran)\upar \, 
	= N_k \upar\, \supset  N_{k-1}\upar\, \supset \cdots \supset N_1\upar \supset N_0 \upar =0 
\]
such that 
$N_l \upar\, / N_{l-1}\upar \, \cong D^{\mu_l \lan j_l \ran^\flat}\upar $ in $\He^\dag \cmod$. 
Thus,
by \eqref{induce D G(r,p,n) to G(r,1,n)}, 
we have 
\begin{align}
\label{decom rel 3}
 \big[ S(\la\lan i\ran ) \upar \, : D^{\dag \mu[j]^\flat} \big]_{\He^{\dag}} 
= \sum_{s=1}^{\Fd_\mu^\flat} \, [S(\la \lan i \ran) : D^{\mu \lan s \ran^\flat}]_{\He}.
\end{align}

On the other hand, 	
by \eqref{iso in Grothen S la} and Lemma \ref{lem G(r,1,n) G(r,p,n) Specht} (\roii), 
we have 
\begin{align}
\label{decom rel 4}
\big[ S(\la\lan i\ran ) \upar \, : D^{\dag \mu[j]^\flat} \big]_{\He^{\dag}}  
= 
\sum_{t=1}^{\Fk_\la} [S^{\dag \la[t]} : D^{\dag \mu[j]^\flat}]_{\He^\dag}.
\end{align}
\eqref{decom rel 3} and \eqref{decom rel 4} 
imply (\roii). 
\end{proof}

\para 
Recall that 
\lq\lq $\sim_R$" 
is the residue equivalence on $\CP_{n,r}$ defined in the previous section. 
We define an equivalent relation \lq\lq $\approx $" on $\CP_{n,r}$ 
by 
$\la \approx \mu$ 
if $\la \sim_R \mu [j]$ 
for some $j \in \ZZ$. 
Put 
\[ 
\vG = \big\{ \la \in \CP_{n,r} \bigm|
\la \not\sim_R   \mu 
	\text{ for any } \mu \in \CP_{n,r} \text{ such that } \mu \not= \la \big\}.
\]
We see easily that 
$\la \sim_R \mu$ 
if and only if 
$\la[i] \sim_R \mu[i]$ 
for any $i \in \ZZ$. 
Thus, 
we have that 
$\la[i] \in \vG$ if $\la \in \vG$. 
Then we have the following proposition.

\begin{prop}
\label{prop sim sim sim}
For $\la \in \CP_{n,r} \setminus \vG$, we have that 
\[ 
S(\la \lan 1 \ran) \sim S(\la \lan 2 \ran) \sim \cdots \sim S(\la \lan \Fd_\la \ran).
\]  
\end{prop}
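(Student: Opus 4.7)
The plan is to transfer the block structure of $\He^\dag$, which is controlled by the residue equivalence $\sim_R$ via Theorem~\ref{thm G(r,1,n)}, to $\He$ by means of Proposition~\ref{prop decom rel G(r,1,n) G(r,p,n)}.

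Since $\la \notin \vG$, pick $\nu \in \CP_{n,r}$ with $\nu \neq \la$ and $\la \sim_R \nu$. By Theorem~\ref{thm G(r,1,n)} applied to $\He^\dag$, the Specht modules $S^{\dag \la}$ and $S^{\dag \nu}$ belong to the same block of $\He^\dag$, hence are linked by a chain of Specht modules in $\He^\dag$ whose consecutive members share a common $\He^\dag$-composition factor. By iterating the main argument along this chain, I reduce to the case that $S^{\dag \la}$ and $S^{\dag \nu}$ directly share a common composition factor $D^{\dag \mu}$ in $\He^\dag$.

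Next, I apply Proposition~\ref{prop decom rel G(r,1,n) G(r,p,n)}(i) with $j = \Fk_\la$, so that $\la[j] = \la$; since $[S^{\dag \la} : D^{\dag \mu}]_{\He^\dag} \geq 1$, for every $i \in \{1,\ldots,\Fd_\mu^\flat\}$ one has
\[
\sum_{s=1}^{\Fd_\la} [S(\la \lan s \ran) : D^{\mu \lan i \ran^\flat}]_\He
= \sum_{t=1}^{\Fk_\mu^\flat} [S^{\dag \la} : D^{\dag \mu[t]^\flat}]_{\He^\dag} \geq 1.
\]
So for each $i$ some $s_\la(i) \in \{1,\ldots,\Fd_\la\}$ makes $D^{\mu \lan i \ran^\flat}$ a composition factor of $S(\la \lan s_\la(i) \ran)$, and likewise some $s_\nu(i) \in \{1,\ldots,\Fd_\nu\}$ works on the $\nu$-side, producing the linkage $S(\la \lan s_\la(i) \ran) \sim S(\nu \lan s_\nu(i) \ran)$. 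Dually, Proposition~\ref{prop decom rel G(r,1,n) G(r,p,n)}(ii) applied to $\la$ with $j$ chosen so that $\mu[j]^\flat = \mu$ forces every $S(\la \lan i \ran)$ ($i = 1, \ldots, \Fd_\la$) to carry some $D^{\mu \lan s \ran^\flat}$ as a composition factor. Combining these observations, every $S(\la \lan i \ran)$ is linked via $\sim$ to some $S(\nu \lan j \ran)$, and symmetrically each $S(\nu \lan j \ran)$ is linked back to some $S(\la \lan \cdot \ran)$; concatenating these linkages over all $i$ chains together $S(\la \lan 1 \ran), \ldots, S(\la \lan \Fd_\la \ran)$.

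The main obstacle is showing that the bipartite linkage graph whose $\la$-side vertices are $\{S(\la \lan s \ran)\}$ and whose bridging vertices are the simples $\{D^{\mu \lan i \ran^\flat}\}$ is in fact connected, rather than being broken into several disjoint clusters. Part~(ii) of Proposition~\ref{prop decom rel G(r,1,n) G(r,p,n)} ensures that every Specht vertex on the $\la$-side has an edge to the bridge, while part~(i) applied to both $\la$ and $\nu$ ensures each bridging simple is hit from both sides, so any would-be disjoint clusters on the $\la$-side are merged through the $\nu$-side. When $D^{\dag \mu}$ alone is not enough to realize this connectivity, one iterates using the full chain of shared $\He^\dag$-composition factors linking $S^{\dag \la}$ to $S^{\dag \nu}$, which supplies enough distinct bridges to fuse all clusters and deliver the desired chain $S(\la \lan 1 \ran) \sim \cdots \sim S(\la \lan \Fd_\la \ran)$.
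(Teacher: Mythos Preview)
Your bipartite-linkage argument has a genuine gap at exactly the point you flag as ``the main obstacle.'' Knowing that every $S(\la\lan i\ran)$ touches some bridging simple $D^{\mu\lan s\ran^\flat}$, and that every bridging simple is touched from both the $\la$-side and the $\nu$-side, does \emph{not} force the graph to be connected. Concretely, if $\Fd_\la=\Fd_\nu=\Fd_\mu^\flat=2$, nothing in Proposition~\ref{prop decom rel G(r,1,n) G(r,p,n)} prevents the pattern
\[
S(\la\lan 1\ran)\ \text{---}\ D^{\mu\lan 1\ran^\flat}\ \text{---}\ S(\nu\lan 1\ran),
\qquad
S(\la\lan 2\ran)\ \text{---}\ D^{\mu\lan 2\ran^\flat}\ \text{---}\ S(\nu\lan 2\ran),
\]
with no cross-edges. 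Your claim that ``would-be disjoint clusters on the $\la$-side are merged through the $\nu$-side'' is unsupported: if the $\nu$-side itself splits the same way, there is no merging. Iterating along a longer chain in $\He^\dag$ does not help either, since every link in the chain could respect the same two-cluster decomposition.

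What is missing is an \emph{asymmetry-breaking} step. The paper supplies it by proving a purely combinatorial claim: for $\la\in\CP_{n,r}\setminus\vG$ with $\Fk_\la\neq p$, one can choose $\mu$ with $\la\sim_R\mu$ and $\Fk_\mu=p$, i.e.\ $\Fd_\mu=1$. (This uses the Jantzen equivalence of \cite{LM} and a rim-hook argument to show that unwrapping/rewrapping a single hook destroys the cyclic symmetry $\la=\la[\Fk_\la]$.) Once such a $\mu$ sits in the chain, say at position $l$, Proposition~\ref{prop decom rel G(r,1,n) G(r,p,n)}(\roi) with $\Fd_{\la_l}=1$ forces the \emph{single} module $S(\la_l\lan 1\ran)$ to contain $D^{\nu_l\lan i\ran^\flat}$ for \emph{every} $i$; combined with part~(\roii), every $S(\la_{l-1}\lan j\ran)$ then links to this one module, and backward induction finishes. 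Your argument needs this (or some equivalent) anchor where $\Fd=1$; without it the connectivity claim is not established.
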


\begin{proof}
If $\Fk_\la=p$, 
there is nothing to prove 
since $\Fd_\la =1$. 
Hence, we assume that $\Fk_\la \not=p$. 
First, we show the following claim. 
\begin{description}
\item[(claim)] 
For $\la \in \CP_{n,r} \setminus \vG$ 
such that $\Fk_\la \not=p$,   
we can take $\mu \in \CP_{n,r}$ 
such that 
$\la \sim_R \mu$, 
and that 
$\Fk_\mu=p$ (thus $\Fd_\mu =1$).  
\end{description}  

Since 
$\la \in \CP_{n,r} \setminus \vG$, 
we can take 
$\mu \in \CP_{n,r}$ 
such that 
$\la \sim_R \mu$ 
and 
$\mu \not=\la$. 
By \cite[Theorem 2.11]{LM}, 
it is known that 
$\la \sim_R \mu$ 
if and only if 
$\la \sim_J \mu$, 
where 
\lq\lq $\sim_J$" 
is the Jantzen equivalence on $\CP_{n,r}$ 
(see \cite[Definition 2.8]{LM} for deinitions).
By the definition of the Jantzen equivalence, 
we may assume that 
$\mu$ 
obtained by  unwrapping a rim hook 
$r_x^\la$ 
from $\la$,  
and wrapping another rim hook 
$r_y^\mu$ from $[\la] \setminus r_x^\la$. 
Namely, 
we have 
$[\la] \setminus r_x^\la = [\mu] \setminus r_y^\mu$ 
(See \cite{LM} for notations here). 
Suppose that 
$x \in \la^{(i)}$ and $y \in \mu^{(j)}$. 
Then 
$[\la] \setminus r_x^\la = [\mu] \setminus r_y^\mu$ 
implies that 
\begin{align}
\label{la i la j mu i mu j} 
\la^{(i)} \not= \mu^{(i)}, \,\,   
\la^{(j)} \not= \mu^{(j)}  
\text{ and } 
\la^{(l)}=\mu^{(l)} \text{ for } l \not=i,j.
\end{align} 
Note that 
$\mu^{(i)} \not= \mu^{(j)}$ if $\la^{(i)}=\la^{(j)}$ and $i \not=j$. 
Thus, 
we have that 
$\mu^{(i)} \not= \mu^{(l)}$ 
for any $l\not=i$  
such that 
$l \equiv i \mod \Fk_\la$ 
and 
$c \cdot p < l \leq (c+1)\cdot p$ 
when 
$c \cdot p < i \leq (c+1)\cdot p$. 
This implies that 
\begin{align}
\label{k la not divide k mu}
\Fk_\la \nmid \Fk_\mu 
\text{ unless } \Fk_\mu =p.
\end{align} 

In the case where $p$ is a prime number,  
\eqref{k la not divide k mu} implies 
$\Fk_\mu =p$ 
since 
$\Fk_\la=1$ by $\Fk_\la \mid p$ and $\Fk_\la \not=p$. 
In the case where  $p=4$, 
one can easily check that 
$\Fk_\mu=p$ directly. 
Let $p \geq 6$ be not a prime number. 
Assume that  $\Fk_\mu \not=p$. 
Then we have $\Fk_\la \nmid \Fk_\mu$ by \eqref{k la not divide k mu}. 
In a similar way as in the above arguments, 
we have $\Fk_\mu \nmid \Fk_\la$ (note that $\Fk_\la \not=p$). 
By the conditions 
$p \geq 6$, 
$\Fk_\la \nmid \Fk_\mu$ 
and 
$\Fk_\mu \nmid \Fk_\la$, 
one sees that 
there are at least three integers $x_1, x_2, x_3$ 
such that 
$\la^{(x_l)} \not= \mu^{(x_l)}$ ($l=1,2,3$). 
However, 
this contradicts to \eqref{la i la j mu i mu j}. 
Thus we have $\Fk_\mu =p$, 
and the claim was proved.

Thanks to the claim, 
we can take 
$\mu \in \CP_{n,r}$ such that 
$\la \sim_R \mu$, and that $\Fd_\mu =1$. 
Then we can take a sequence 
$\la = \la_0, \cdots, \la_{k}= \mu$ 
satisfying the following two conditions: 
\begin{itemize}
\item 
$S^{\dag \la_{i-1}}$ and $S^{\dag \la_{i}}$ have a common composition factor $D^{\dag \nu_i}$. 
\item 
There exists an integer 
$l$ such that $\Fd_{\la_i} \not=1$ for any $i< l$, 
and that $\Fd_{\la_l}=1$.   
\end{itemize}
By Proposition \ref{prop decom rel G(r,1,n) G(r,p,n)} (\roi), 
one sees that 
$S(\la_l \lan 1 \ran)$ has a composition factor $D^{\nu_l \lan i \ran^\flat}$ 
for any $i\in \{1,\cdots, \Fd_{\nu_l}^\flat\}$ 
(Note that $\Fd_{\la_l}=1$).  
On the other hand, 
by Proposition \ref{prop decom rel G(r,1,n) G(r,p,n)} (\roii), 
one sees that 
$S(\la_{l-1}\lan j \ran)$ ($1 \leq j \leq \Fd_{\la_{l-1}}$) 
has a composition factor 
$D^{\nu_l \lan i \ran^\flat}$ for some $i \in \{1,\cdots, \Fd_{\nu_l}^\flat\}$. 
Thus, 
we have that 
$S(\la_l \lan 1 \ran) \sim S(\la_{l-1} \lan j \ran)$ for any $j =1,\cdots, \Fd_{\la_{l-1}}$. 
This implies that 
$S(\la_{l-1} \lan 1 \ran) \sim S(\la_{l-1} \lan 2 \ran) 
	\sim \cdots \sim S(\la_{l-1} \lan \Fd_{\la_{l-1}} \ran)$. 
By using the (backword) inductive argument 
combined with Proposition \ref{prop decom rel G(r,1,n) G(r,p,n)}, 
we have the proposition.
\end{proof}

\begin{thm}\
\label{main thm}
\begin{enumerate}
\item 
For $\la \in \vG$ and $i=1,\cdots, \Fd_\la$, 
we have that 
$S(\la\lan i \ran)$ (resp. $\D(\la \lan i \ran)$) 
is a simple $\He$-module 
(resp. a simple object of $\CO$).  
Moreover, 
$S(\la\lan i \ran)$ (resp. $\D(\la \lan i \ran)$) 
is a block of $\He$ (resp. of $\CO$) itself.  
 
\item 
For $\la,\mu \in \CP_{n,r} \setminus \vG$ 
and $i,j \in \ZZ$, 
we have that 
\[
\D(\la \lan i \ran ) \sim \D(\mu \lan j \ran ) 
\Leftrightarrow  
S(\la \lan i \ran ) \sim S(\mu \lan j \ran) 
\Leftrightarrow \la \approx \mu.\]
\end{enumerate}
\end{thm}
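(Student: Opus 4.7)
The plan is to reduce both parts to the corresponding block-statements for $\He^{\dag}$ (classified by Theorem~\ref{thm G(r,1,n)} via Theorem~\ref{thm LM}) using the dictionary between $\He$ and $\He^{\dag}$ furnished by Propositions~\ref{prop sim sim sim} and~\ref{prop decom rel G(r,1,n) G(r,p,n)}. Since the first equivalence in (ii) is immediate from Proposition~\ref{prop equivalent D la sim D mu S la sim S mu} and since simplicity of $\D(\la\lan i\ran)$ is equivalent to simplicity of $S(\la\lan i\ran)$ (by exactness of $\KZ$ together with Lemma~\ref{lem projective simple B}), the entire theorem reduces to statements about the relation $\sim$ on $\{S(\la\lan i\ran)\}$.

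For (i), I would first observe that if $\la\in\vG$ then every composition factor $D^{\dag\mu}$ of $S^{\dag\la}$ has $\mu\in\CK_{n,r}$ with $\mu\sim_R\la$, forcing $\mu=\la$; combined with $[S^{\dag\la}:D^{\dag\la}]_{\He^\dag}=1$ this gives $\la\in\CK_{n,r}$ and $S^{\dag\la}\cong D^{\dag\la}$. Since $\la[k]\in\vG$ for each $k$, the analogous fact $S^{\dag\la[k]}\cong D^{\dag\la[k]}$, combined with $D^{\dag\la[k]^\flat}=(D^{\dag\la})^{\s^k}$ and the key lemma $(S^{\dag\la})^{\s}\cong S^{\dag\la[1]}$ (discussed below), gives $\la[k]=\la[k]^\flat$; in particular $\Fd_\la=\Fd_\la^\flat$. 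Lemma~\ref{lem G(r,1,n) G(r,p,n) Specht}(i) together with~\eqref{decom D G(r,1,n) to G(r,p,n)} then yields
\[
\sum_{s=1}^{\Fd_\la}[S(\la\lan s\ran)]=[S^{\dag\la}\dwar]=[D^{\dag\la}\dwar]=\sum_{k=0}^{\Fd_\la^\flat-1}[D^{\t^k}]
\]
in $K_0(\He\cmod)$, a sum of equal numbers of pairwise distinct simple classes. Because the $\Fd_\la$ standard modules $S(\la\lan s\ran)$ have pairwise distinct simple heads $D(\la\lan s\ran)$ by the quasi-hereditary structure, each $S(\la\lan s\ran)$ must coincide with one of the $D^{\t^k}$, hence is simple; simplicity of $\D(\la\lan s\ran)$ then follows. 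For the block assertion, if $S(\la\lan s\ran)\sim S(\rho\lan r\ran)$ via a shared composition factor, Proposition~\ref{prop decom rel G(r,1,n) G(r,p,n)}(i) forces $\rho\sim_R\la[t]$ for some $t$, whence $\rho=\la[t]$ by $\la\in\vG$; so $\rho\lan r\ran=\la\lan r\ran$, and simplicity of $S(\la\lan r\ran)=D(\la\lan r\ran)$ collapses the chain to force $r=s$.

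For the second equivalence in (ii), the ($\Leftarrow$) direction fixes $\la\sim_R\mu[k]$ and a witnessing chain $\la=\nu_0,\ldots,\nu_l=\mu[k]$ with consecutive $S^{\dag\nu_{i-1}},S^{\dag\nu_i}$ sharing a composition factor $D^{\dag\rho_i}$, chosen so that all $\nu_i\notin\vG$ (elements of $\vG$ form singleton $\sim_R$-blocks and so can be trimmed from the chain). Proposition~\ref{prop decom rel G(r,1,n) G(r,p,n)}(i) converts each shared factor into a common composition factor of some $S(\nu_{i-1}\lan s\ran)$ and some $S(\nu_i\lan s'\ran)$; Proposition~\ref{prop sim sim sim} consolidates this to $S(\nu_{i-1}\lan 1\ran)\sim S(\nu_i\lan 1\ran)$, and chaining yields $S(\la\lan 1\ran)\sim S(\mu[k]\lan 1\ran)=S(\mu\lan 1\ran)$; a final application of Proposition~\ref{prop sim sim sim} handles arbitrary $i,j$. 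The ($\Rightarrow$) direction takes a chain $S(\la\lan i\ran)=S(\a_0),\ldots,S(\a_l)=S(\mu\lan j\ran)$, writes $\a_k=\nu_k\lan i_k\ran$, and applies Proposition~\ref{prop decom rel G(r,1,n) G(r,p,n)}(i) twice per step to obtain $\xi'_k\in\CK_{n,r}$ and $t_k,t'_k\in\ZZ$ with $\nu_{k-1}\sim_R\xi'_k[t_k]^\flat$ and $\nu_k\sim_R\xi'_k[t'_k]^\flat$; invoking the key lemma in the form $\rho[t]^\flat\sim_R\rho[t]$ gives $\nu_{k-1}\approx\xi'_k\approx\nu_k$, and chaining yields $\la\approx\mu$.

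The main obstacle is the key lemma $(S^{\dag\rho})^{\s}\cong S^{\dag\rho[1]}$, equivalently $\rho[t]^\flat\sim_R\rho[t]$ for $\rho\in\CK_{n,r}$. I expect to establish it by observing that $\s$ multiplies every eigenvalue of $T_0$ by $\xi$, which cyclically permutes the parameters $Q_i=x_c\xi^j$ in precisely the way that matches the combinatorial $[1]$-shift on components of $\rho$; taking $\s^t$-twisted simple quotients then shows $D^{\dag\rho[t]^\flat}=(D^{\dag\rho})^{\s^t}$ is a composition factor of $S^{\dag\rho[t]}$, and Theorem~\ref{thm LM} gives $\rho[t]^\flat\sim_R\rho[t]$. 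This parameter-level verification (which can alternatively be extracted from~\cite{GJ} or~\cite{Hu09}) is the only nontrivial input; the remainder of the proof is systematic chain-chasing using the results already in hand.
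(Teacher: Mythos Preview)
Your overall architecture is sound, and the $(\Leftarrow)$ direction of (ii) matches the paper. However, the ``key lemma'' you flag as the main obstacle is entirely avoidable, and the paper's proof dispenses with it.

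The point you missed is the freedom of the index $j$ in Proposition~\ref{prop decom rel G(r,1,n) G(r,p,n)}~(ii): the identity
\[
\sum_{s} [S(\la\lan i\ran):D^{\nu\lan s\ran^\flat}]_{\He}
=\sum_{t}[S^{\dag\la[t]}:D^{\dag\nu[j]^\flat}]_{\He^\dag}
\]
holds for \emph{every} $j$. Hence if $S(\la\lan i\ran)$ and $S(\mu\lan j\ran)$ share a composition factor $D^{\nu\lan k\ran^\flat}$, one may fix one value $j_0$ and conclude that some $S^{\dag\la[t]}$ and some $S^{\dag\mu[t']}$ \emph{both} contain $D^{\dag\nu[j_0]^\flat}$; thus $\la[t]\sim_R\mu[t']$ and $\la\approx\mu$ directly. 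This is exactly how the paper proves $(\Rightarrow)$ in (ii) (and the general ``only if'' step used in (i)). You instead routed this implication through part~(i) of the proposition, which only yields $\la\sim_R\nu[t_0]^\flat$ and $\rho\sim_R\nu[t_1]^\flat$ with no control on $t_0,t_1$; that is precisely what forces the key lemma $\nu[t]^\flat\sim_R\nu[t]$ upon you. (Your line ``Proposition~\ref{prop decom rel G(r,1,n) G(r,p,n)}(i) forces $\rho\sim_R\la[t]$'' is not justified without it.)

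For part~(i) the paper again avoids the key lemma. Once the general ``only if'' is in hand, $\la\in\vG$ gives that $S(\la\lan i\ran)\sim S(\mu\lan j\ran)$ forces $\mu=\la[l]$, hence $\mu\lan j\ran=\la\lan j\ran$. To separate the various $\la\lan j\ran$, the paper uses a short counting contradiction with Proposition~\ref{prop decom rel G(r,1,n) G(r,p,n)}~(i): if $S(\la\lan i\ran)$ and $S(\la\lan j\ran)$ with $\la\lan i\ran\neq\la\lan j\ran$ shared some $D^{\mu\lan k\ran^\flat}$, the left side of (i) would be $\geq 2$ while the right side is $\leq 1$ because $S^{\dag\la}$ is simple. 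Simplicity of $\D(\la\lan i\ran)$ (and hence of $S(\la\lan i\ran)$) then follows formally from being alone in its block; there is no need to establish $\Fd_\la=\Fd_\la^\flat$ or perform your Grothendieck-group matching.

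In short: your proof would go through once the key lemma is supplied, but it is an unnecessary detour. Replacing your use of Proposition~\ref{prop decom rel G(r,1,n) G(r,p,n)}~(i) by~(ii) in the $(\Rightarrow)$ step, and your $K_0$-count by the $\geq 2$ versus $\leq 1$ contradiction, recovers the paper's shorter argument.
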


\begin{proof}
Suppose that 
$S(\la \lan i \ran)$ and $S(\mu \lan j \ran)$ 
have a common composition factor 
$D^{\nu \lan k \ran^\flat}$. 
Then, by Proposition \ref{prop decom rel G(r,1,n) G(r,p,n)} (\roii), 
$S^{\dag \la[i']}$ and $S^{\dag \mu[j']}$ 
have a common composition factor 
$D^{\dag \mu}$ 
for some $i', j'$. 
This implies that 
\begin{align}
\label{S la sim S mu only if} 
S(\la \lan i \ran) \sim S(\mu \lan j \ran) 
\text{ only if } 
\la \approx \mu.
\end{align}

(\roi) 
Suppose that 
$\la \in \vG$, 
then 
$S^{\dag \la}$ is a simple $\He^\dag$-module 
from the definition of $\vG$. 
If 
$S(\la \lan i \ran) \sim S(\mu \lan j \ran)$ 
for some $\mu \lan j \ran \in \vL^+$,  
we have that $\la \approx \mu$ by \eqref{S la sim S mu only if}. 
This implies that 
there exists an integer $l$ 
such that 
$\la = \mu[l]$ 
since $\la \in \vG$.  
Thus, we have that 
$\mu \lan j \ran =\mu[l]\lan j\ran = \la \lan j \ran$ 
from the definition of $\vL^+$. 
Now we may assume that 
$S(\la \lan i \ran)$ and $S(\la \lan j \ran)$ 
have a common composition factor 
$D^{\mu \lan k \ran^\flat}$. 
If $\la \lan i \ran \not= \la \lan j \ran$ 
(i.e. $i \not\equiv j \mod \Fd_\la$), 
we have $\sum_{s=1}^{\Fd_\la} [ S(\la \lan s \ran) : D^{\mu \lan k \ran^\flat}]_{\He} \geq 2$.  
On the other hand, 
we have 
$\sum_{t=1}^{\Fk_\mu^\flat} [S^{\dag \la} : D^{\dag \mu[t]^\flat}]_{\He^\dag} \leq 1$ 
since $S^{\dag \la}$ is simple. 
These contradict to Proposition \ref{prop decom rel G(r,1,n) G(r,p,n)} (\roi). 
Thus we have $\la \lan i \ran = \la \lan j \ran = \mu \lan j \ran$. 
This implies (\roi).

Next we prove (\roii). 
For $\la,\mu \in \CP_{n,r} \setminus \vG$, 
suppose that 
$S^{\dag \la}$ and $S^{\dag \mu}$ 
have a common composition factor $D^{\dag \nu}$. 
Then, 
by Proposition \ref{prop decom rel G(r,1,n) G(r,p,n)} (\roi), 
$S(\la\lan i \ran)$ and $S(\mu \lan j \ran)$ 
have a common composition factor 
$D^{\nu\lan l \ran^\flat}$ for some $i,j$ (and for any $l$). 
Thus, 
$S(\la \lan i \ran) \sim S(\mu \lan j \ran)$. 
Combining Proposition \ref{prop sim sim sim} and \eqref{S la sim S mu only if}, 
we obtain the theorem.  

\end{proof}




\end{document}